\providecommand{\U}[1]{\protect \rule{.1in}{.1in}}
\newtheorem{theorem}{Theorem}
\newtheorem{corollary}[theorem]{Corollary}
\newtheorem{proposition}[theorem]{Proposition}
\newtheorem{remark}[theorem]{Remark}
\newenvironment{proof}[1][Proof]{\noindent \textbf{#1.} }{\  \rule{0.5em}{0.5em}}
\begin{document}

\title{\textbf{Covariation representations for Hermitian L\'{e}vy process ensembles
of free infinitely divisible distributions }}
\author{J. Armando Dom\'{\i}nguez-Molina\thanks{jadguez@uas.edu.mx}\\Facultad de Ciencias F\'{\i}sico-Matem\'{a}ticas\\Universidad Aut\'{o}noma de Sinaloa, M\'{e}xico
\and V\'{\i}ctor P\'{e}rez-Abreu\thanks{pabreu@cimat.mx}\\Departamento de Probabilidad y Estad\'{\i}stica\\CIMAT, Guanajuato, M\'{e}xico
\and Alfonso Rocha-Arteaga\thanks{arteaga@uas.edu.mx}\\Facultad de Ciencias F\'{\i}sico-Matem\'{a}ticas\\Universidad Aut\'{o}noma de Sinaloa, M\'{e}xico}
\maketitle

\begin{abstract}
It is known that the so-called Bercovici-Pata bijection can be explained in
terms of certain Hermitian random matrix ensembles $\left(  M_{d}\right)
_{d\geq1}$ whose asymptotic spectral distributions are free infinitely
divisible. We investigate Hermitian L\'{e}vy processes with jumps of rank one
associated to these random matrix ensembles introduced in \cite{BG} and
\cite{CD}. A sample path approximation by covariation processes for these
matrix L\'{e}vy processes is obtained. As a general result we prove that any
$d\times d$ complex matrix subordinator with jumps of rank one is the
quadratic variation of an $\mathbb{C}^{d}$-valued L\'{e}vy process. In
particular, we have the corresponding result for matrix subordinators with
jumps of rank one associated to the random matrix ensembles $\left(
M_{d}\right)  _{d\geq1}$.

\textbf{Key words}\textit{:\ }Infinitely divisible random matrix, matrix
subordinator, Bercovici-Pata bijection, matrix semimartingale, matrix compound Poisson.

\textbf{AMS 2010 Subject Classification}: 60B20; 60E07; 60G51; 60G57.

\end{abstract}

\section{Introduction}

New models of infinitely divisible random matrices have emerged in recent
years from both applications and theory. On the one hand, they have been
important in multivariate financial L\'{e}vy modelling where stochastic
volatility models have been proposed using L\'{e}vy and Ornstein-Uhlenbeck
matrix valued processes; see \cite{BNSe07}, \cite{BNSe09}, \cite{BNS11} and
\cite{PiSe09a}. A key role in these models is played by the positive-definite
matrix processes and more general matrix covariation processes.

On the other hand, in the context of free probability, Bercovici and Pata
\cite{BP} introduced a bijection $\Lambda$ from the set of classical
infinitely divisible distributions to the set of free infinitely divisible
distributions. This bijection was explained in terms of random matrix
ensembles by Benaych-Georges \cite{BG} and Cabanal-Duvillard \cite{CD},
providing in a more palpable way the bijection $\Lambda$ and producing a new
kind of infinitely divisible random matrix ensembles. Moreover, the results in
\cite{BG} and \cite{CD} constitute a generalization of Wigner's result for the
Gaussian Unitary Ensemble and give an alternative simple infinitely divisible
random matrix model for the Marchenko-Pastur distribution, for which the
Wishart and other empirical covariance matrix ensembles are not infinitely divisible.

More specifically, it is shown in \cite{BG} and \cite{CD} that for any
one-dimensional infinitely divisible distribution $\mu$ there is an ensemble
of Hermitian random matrices $(M_{d})_{d\geq1}$, whose empirical spectral
distribution converges weakly almost surely to $\Lambda(\mu)$ as $d$ goes to
infinity. Moreover, for each $d\geq1$, $M_{d}$ has a unitary invariant matrix
distribution which is also infinitely divisible in the matrix sense. From now
on we call these models BGCD matrix ensembles. We consider additional facts of
BGCD models in Section 3.

A problem of further interest is to understand the matrix L\'{e}vy processes
$\left \{  M_{d}(t)\right \}  _{t\geq0}$ associated to the BGCD matrix
ensembles. It was pointed out in \cite{DRA}, \cite{PAS} that the L\'{e}vy
measures of these models are concentrated on rank one matrices. This means
that the random matrix $M_{d}$ is a realization, at time one, of a matrix
valued L\'{e}vy process $\left \{  M_{d}(t)\right \}  _{t\geq0}$ with rank one
jumps $\Delta M_{d}(t)=M_{d}(t)-M_{d}(t-).$

The purpose of this paper is to study the structure of a $d\times d$ Hermitian
L\'{e}vy process $\left \{  L_{d}(t)\right \}  _{t\geq0}$ with rank one jumps.
It is shown in Section 4 that if $L_{d}$ is a $d\times d$ complex matrix
subordinator, it is the quadratic variation of an $\mathbb{C}^{d}$-valued
L\'{e}vy process $X_{d}$, being the converse and extension of a known result
in dimension one, see \cite[Example 8.5]{CT}. The process $X_{d}$ is
constructed via its L\'{e}vy-It\^{o} decomposition. In\ Section 5 we consider
new realizations in terms of covariation of $\mathbb{C}^{d}$-valued L\'{e}vy
process for matrix compound Poisson process as well as sample path
approximations for L\'{e}vy processes associated to general BGCD ensembles. A
new insight on Marchenko-Pastur's type results for empirical covariance matrix
ensembles was recently given in \cite{BGCD} by considering compound Poisson
models (then infinitely divisible). In this direction our results show the
role of covariation of $d$-dimensional L\'{e}vy processes as an alternative to
empirical covariance processes.

For convenience of the reader, and since the material and notation in the
literature is disperse and incomplete, we include Section 2 with a review on
preliminaries on complex matrix semimartingales and matrix valued L\'{e}vy
processes that are used later on in this paper.

\section{Preliminaries on matrix semimartingales and matrix L\'{e}vy \newline
processes}

Let $\mathbb{M}_{d\times q}=\mathbb{M}_{d\times q}\left(  \mathbb{C}\right)  $
denote the linear space of $d\times q$ matrices with complex (respectively
real) entries with scalar product $\left \langle A,B\right \rangle
=~\mathrm{tr}\left(  AB^{\ast}\right)  $ and the Frobenius norm $\left \Vert
A\right \Vert =\left[  \mathrm{tr}\left(  AA^{\ast}\right)  \right]  ^{1/2}$
where $\mathrm{tr}$ denotes the (non normalized) trace. If $q=d,$ we write
$\mathbb{M}_{d}=\mathbb{M}_{d\times d}$. The set of Hermitian random matrices
in $\mathbb{M}_{d}$ is denoted by $\mathbb{H}_{d}$. Likewise, let
$\mathbb{U}_{d\times q}=\mathbb{U}_{d\times q}\left(  \mathbb{C}\right)
=\left \{  U\in \mathbb{M}_{d\times q}:U^{\ast}U=\mathrm{I}_{q}\right \}  .$ If
$q=d,$ $\mathbb{U}_{d}=\mathbb{U}_{d\times d}$.

We denote by $\mathbb{H}_{d(1)}$ the set of matrices in $\mathbb{H}_{d}$ of
rank one and by $\mathbb{H}_{d}^{+}$ ($\overline{\mathbb{H}}_{d}^{+}$) the set
of positive (nonnegative) definite matrices in $\mathbb{H}_{d}$. Likewise
$\mathbb{H}_{d(1)}^{+}=\mathbb{H}_{d(1)}\cap \overline{\mathbb{H}}_{d}^{+}$ is
the closed cone of $d\times d$ nonnegative definite matrices of rank one. Let
$\mathbb{S}(\mathbb{H}_{d(1)})$ denote the unit sphere of $\mathbb{H}_{d(1)}$.

\begin{remark}
\label{Decomp}(a) Every $V\in \mathbb{H}_{d(1)}^{+}$ can be written as
$V=xx^{\ast}$ where $x\in \mathbb{C}^{d}$. One can see that $x$ is unique if we
restrict $x$ to the set $C_{+}^{d}=\{x=\left(  x_{1},x_{2},\ldots
,x_{d}\right)  \allowbreak:\allowbreak x_{1}\allowbreak \geq \allowbreak
0,\allowbreak$ $x_{j}\allowbreak \in \allowbreak \mathbb{C},\allowbreak$
$j\allowbreak=2,\allowbreak...,\allowbreak d\}$.

(b) Every $V\in \mathbb{H}_{d\left(  1\right)  }$ can be written as $V=\lambda
uu^{\ast}$ where $\lambda$ the eigenvalue of $V$ and $u$ is a unitary vector
in $\mathbb{C}^{d}$. In this representation the $d\times d$ matrix $uu^{\ast}$
is unique.
\end{remark}

\paragraph{Covariation of complex matrix semimartingales}

An $\mathbb{M}_{d\times q}$-valued process $X=\left \{  (x_{ij})(t)\right \}
_{t\geq0}$ is a matrix semimartingale if $x_{ij}(t)$ is a complex
semimartingale for each $i=1,...,d,j=1,...,q.$ Let $X=\left \{  (x_{ij}%
)(t)\right \}  _{t\geq0}$ $\in \mathbb{M}_{d\times q}$ and $Y=\left \{
(y_{ij})(t)\right \}  _{t\geq0}\in \mathbb{M}_{q\times r}$ be semimartingales.
Similar to the case of matrices with real entries in \cite{BNSe07}, we define
the matrix covariation of $X$ and $Y$ as the $\mathbb{M}_{d\times r}$-valued
process $\left[  X,Y\right]  :=\left \{  \left[  X,Y\right]  (t):t\geq
0\right \}  $ with entries%
\begin{equation}
\left[  X,Y\right]  _{ij}(t)=\sum \limits_{k=1}^{q}\left[  x_{ik}%
,y_{kj}\right]  (t)\text{,} \label{DefCov}%
\end{equation}
where $\left[  x_{ik},y_{kj}\right]  (t)$ is the covariation of the
$\mathbb{C}$-valued semimartingales $\left \{  x_{ik}(t)\right \}  _{t\geq0}$
and $\left \{  x_{kj}(t)\right \}  _{t\geq0}$; see \cite[pp 83]{Pr04}. One has
the decomposition into a continuous part and a pure jump part as follows%
\begin{equation}
\left[  X,Y\right]  (t)=\left[  X^{c},Y^{c}\right]  (t)+\sum_{s\leq t}\left(
\Delta X(s)\right)  \left(  \Delta Y(s)\right)  \text{,} \label{ForCov}%
\end{equation}
where $\left[  X^{c},Y^{c}\right]  _{ij}(t):=\sum \nolimits_{k=1}^{q}\left[
x_{ik}^{c},y_{kj}^{c}\right]  (t).$ We recall that for any semimartingale $x$,
the process $x^{c}$ is the $a.s.$ unique continuous local martingale $m$ such
that $\left[  x-m\right]  $ is purely discontinuous.

We will use the facts that $\left[  X\right]  =\left[  X,X^{\ast}\right]  $ is
a nonnegative definite $d\times d$ matrix, that $\left[  X,Y\right]  ^{\top
}=\left[  Y^{\top},X^{\top}\right]  $ and that for any nonrandom matrices
$A\in \mathbb{M}_{m\times d},C\in \mathbb{M}_{r\times n}$ and semimartingales
$X\in \mathbb{M}_{d\times q},Y\in \mathbb{M}_{q\times r}$,%
\begin{equation}
\left[  AX,YC\right]  =A\left[  X,Y\right]  C\text{.} \label{CovBil}%
\end{equation}

The natural example of a continuous semimartingale is the standard complex
$d\times q$ matrix Brownian motion $B=\left \{  B(t)\right \}  _{t\geq
0}=\left \{  b_{jl}(t)\right \}  _{t\geq0}$ consisting of independent
$\mathbb{C}$-valued Brownian motions $b_{jl}(t)=\operatorname{Re}%
(b_{jl}(t))+\mathrm{i}\operatorname{Im}(b_{jl}(t))$ where $\operatorname{Re}%
(b_{jl}(t)),\operatorname{Im}(b_{jl}(t))$ are independent one-dimensional
Brownian motions with common variance $t/2$. Then we have $\left[  B,B^{\ast
}\right]  ^{ij}(t)=\sum \nolimits_{k=1}^{q}\left[  b_{ik},\overline{b}%
_{jk}\right]  (t)=qt\delta_{ij}$ and hence the matrix quadratic variation of
$B$ is given by the $d\times d$ matrix process:%
\begin{equation}
\left[  B,B^{\ast}\right]  (t)=qt\mathrm{I}_{d}\text{.} \label{CovCB}%
\end{equation}
The case $q=1$ corresponds to the $\mathbb{C}^{d}$-valued standard Brownian
motion $B$.$\ $We observe this corresponds to $\left[  B,B^{\ast}\right]
_{t}=t\mathrm{I}_{d}$ instead of the common $2t\mathrm{I}_{d}$ used in the literature.

Other examples of complex matrix semimartingales are L\'{e}vy processes
considered next.

\paragraph{Complex matrix L\'{e}vy processes}

An infinitely divisible random matrix $M$ in $\mathbb{M}_{d\times q}$ is
characterized by the L\'{e}vy-Khintchine representation of its Fourier
transform $\mathbb{E}\mathrm{e}^{\mathrm{itr}(\Theta^{\ast}M)}\allowbreak
\ =\  \allowbreak \exp(\psi(\Theta))$ with Laplace exponent
\begin{equation}
\psi(\Theta)={}\mathrm{itr}(\Theta^{\ast}\Psi \text{ }){}-{}\frac{1}%
{2}\mathrm{tr}\left(  \Theta^{\ast}\mathcal{A}\Theta^{\ast}\right)  {}+{}%
\int_{\mathbb{M}_{d\times q}}\left(  \mathrm{e}^{\mathrm{itr}(\Theta^{\ast}%
\xi)}{}-1{}-\mathrm{i}\frac{\mathrm{tr}(\Theta^{\ast}\xi)}{1+\left \Vert
\xi \right \Vert ^{2}}{}\right)  \nu(\mathrm{d}\xi),\  \Theta \in \mathbb{M}%
_{d\times q}, \label{LKRgen}%
\end{equation}
where $\mathcal{A}:\mathbb{M}_{q\times d}\rightarrow \mathbb{M}_{d\times q}$ is
a positive symmetric linear operator $($i.e. $\mathrm{tr}\left(  \Phi^{\ast
}\mathcal{A}\Phi^{\ast}\right)  \geq0$ for $\Phi \in \mathbb{M}_{d\times q}$ and
$\mathrm{tr}\left(  \Theta_{2}^{\ast}\mathcal{A}\Theta_{1}^{\ast}\right)
=\mathrm{tr}\left(  \Theta_{1}^{\ast}\mathcal{A}\Theta_{2}^{\ast}\right)  $
for $\Theta_{1},\Theta_{2}\in \mathbb{M}_{d\times q})$, $\nu$ is a measure on
$\mathbb{M}_{d\times q}$ (the L\'{e}vy measure) satisfying $\nu(\{0\})=0$ and
$\int_{\mathbb{M}_{d\times q}}(1\wedge \left \Vert x\right \Vert ^{2}%
)\nu(\mathrm{d}x)<\infty$, and $\Psi \in \mathbb{M}_{d\times q}$. The triplet
$(\mathcal{A},\nu,\Psi)$ uniquely determines the distribution of $M$.

\begin{remark}
\label{ObsGaPart}The notation $\mathcal{A}\Theta^{\ast}$ means the linear
operator $\mathcal{A}$ from $\mathbb{M}_{q\times d}$ to $\mathbb{M}_{d\times
q}$ acting on $\Theta^{\ast}\in \mathbb{M}_{q\times d}$. Some interesting
examples of $\mathcal{A}$ and its corresponding matrix Gaussian distributions are:

(a) $\mathcal{A}\Theta^{\ast}=\Theta$. This corresponds to a Gaussian matrix
distribution invariant under left and right unitary transformations in
$\mathbb{U}_{d}$ and $\mathbb{U}_{q}$, respectively.

(b) $\mathcal{A}\Theta^{\ast}=\Sigma_{1}\Theta \Sigma_{2}$ for $\Sigma_{1}\in$
$\overline{\mathbb{H}}_{d}^{+}$ and $\Sigma_{2}\in \overline{\mathbb{H}}%
_{q}^{+}$. In this case the corresponding matrix Gaussian distribution is
denoted by $\mathrm{N}_{d\times q}(0,\Sigma_{1}\otimes \Sigma_{2})$ \ and
$\Sigma_{1}\otimes \Sigma_{2}$ is called a Kronecker covariance. It holds that
if $N$ has the distribution $\mathrm{N}_{d\times q}(0,\mathrm{I}_{d}{}%
\otimes \mathrm{I}_{q})$, then $\Sigma_{1}^{1/2}N\Sigma_{2}^{1/2}$ has
distribution $\mathrm{N}_{d\times q}(0,\Sigma_{1}\otimes \Sigma_{2})$.

(c) When $q=d$, $\mathcal{A}\Theta^{\ast}=$ $\mathrm{tr}(\Theta)\mathrm{I}%
_{d}$ is the covariance operator of the Gaussian random matrix $g\mathrm{I}%
_{d}$ where $g$ is a one-dimensional random variable with a standard Gaussian distribution.
\end{remark}

Let $\mathbb{S}_{d\times q}$ be the unit sphere of $\mathbb{M}_{d\times q}$
and let $\mathbb{M}_{d\times q}^{0}=\mathbb{M}_{d\times q}\backslash \{0\}$. If
$\nu$ is a L\'{e}vy measure on $\mathbb{M}_{d\times q}$, then there are a
measure $\lambda$ on $\mathbb{S}_{d\times q}$ with $\lambda(\mathbb{S}%
_{d\times q})\geq0$ and a measure $\nu_{\xi}$ for each $\xi \in \mathbb{S}%
_{d\times q}$ with $\nu_{\xi}((0,\infty))>0$ such that
\[
\nu(E)=\int_{\mathbb{S}_{d\times q}}\lambda(\mathrm{d}\xi)\int_{(0,\infty
)}1_{E}(u\xi)\nu_{\xi}(\mathrm{d}u),\qquad E\in \mathcal{B}(\mathbb{M}_{d\times
q}^{0}).
\]
We call $(\lambda,\nu_{\xi})$ a polar decomposition of $\nu$. When $d=q=1$,
$\nu$ is a L\'{e}vy measure on $\mathbb{R}$ and $\lambda$ is a measure in the
unit sphere $\mathbb{S}_{1\times1}=\left \{  -1,1\right \}  $ of $\mathbb{R}$.

Any $\mathbb{M}_{d\times q}$-valued L\'{e}vy process $L=\left \{  L(t)\right \}
_{t\geq0}$ with triplet $(\mathcal{A},\nu,\Psi)$ is a semimartingale with the
L\'{e}vy-It\^{o} decomposition
\begin{equation}
L(t)=t\Psi+B_{\mathcal{A}}(t)+\int_{[0,t]}\int_{\left \Vert V\right \Vert \leq
1}V\widetilde{J}_{L}(\mathrm{d}s\mathrm{,d}V)+\int_{[0,t]}\int_{\left \Vert
V\right \Vert >1}VJ_{L}(\mathrm{d}s,\mathrm{d}V)\text{, }t\geq0, \label{LID}%
\end{equation}
where:

(a) $\left \{  B_{\mathcal{A}}(t)\right \}  _{t\geq0}$ is a $\mathbb{M}_{d\times
q}$-valued Brownian motion with covariance $\mathcal{A}$, i.e. it is a
L\'{e}vy process with continuous sample paths (a.s.) and each $B_{\mathcal{A}%
}(t)$ is centered Gaussian with
\[
\mathbb{E}\left \{  \mathrm{tr(}\Theta_{1}^{\ast}B_{\mathcal{A}}(t){}%
)\mathrm{tr}\left(  \Theta_{2}^{\ast}B_{\mathcal{A}}(s){}\right)  {}\right \}
=\min(s,t)\mathrm{tr}\left(  \Theta_{1}^{\ast}\mathcal{A}\Theta_{2}^{\ast
}\right)  {}\text{for each }\Theta_{1},\Theta_{2}\in \mathbb{M}_{d\times q},
\]

(b) $J_{L}(\cdot,\cdot)$ is the Poisson random measure of jumps on
$[0,\infty)\times \mathbb{M}_{d\times q}^{0}$. That is, $J_{L}(t,E)=\# \{(0\leq
s\leq t:\allowbreak \Delta L_{s}\in E\},$ $E$ $\in \mathbb{M}_{d\times q}^{0},$
with intensity measure $Leb\otimes \nu$, and independent of $\left \{
B_{\mathcal{A}}(t)\right \}  _{t\geq0}$,

(c) $\widetilde{J}_{L}$ is the compensator measure of $J_{L}$, i.e.
\[
\widetilde{J}_{L}(\mathrm{d}t,\mathrm{d}V)=J_{L}(\mathrm{d}t,\mathrm{d}%
V)-\mathrm{d}t\nu(\mathrm{d}V);
\]
see for example \cite{Ap07} for the most general case of L\'{e}vy processes
with values in infinite dimensional Banach spaces.

An $\mathbb{M}_{d\times q}$-valued L\'{e}vy process $L=\left \{  L(t)\right \}
_{t\geq0}$ has bounded variation if and only if its L\'{e}vy-It\^{o}
decomposition takes the form
\begin{equation}
L(t)=t\Psi_{0}+\int_{[0,t]}\int_{\mathbb{M}_{d\times q}^{0}}VJ_{L}%
(\mathrm{d}s,\mathrm{d}V)=t\Psi_{0}+\sum_{s\leq t}\Delta L(s)\text{, }t\geq0,
\label{LIFV}%
\end{equation}
where $\Psi_{0}=$ $\Psi-\int_{\left \Vert V\right \Vert \leq1}V\nu
(\mathrm{d}V).$

The matrix quadratic variation (\ref{ForCov}) of $L$ is given by the
$\overline{\mathbb{H}}_{d}^{+}$-valued process
\begin{equation}
\lbrack L](t)=\left[  B_{\mathcal{A}},B_{\mathcal{A}}^{\ast}\right]
(t)+\int_{[0,t]}\int_{\mathbb{M}_{d\times q}^{0}}VV^{\ast}J_{L}(\mathrm{d}%
s,\mathrm{d}V)=\left[  B_{\mathcal{A}},B_{\mathcal{A}}^{\ast}\right]
(t)\mathcal{+}\sum_{s\leq t}\Delta L(s)\Delta L(s)^{\ast}. \label{QVLP}%
\end{equation}

In\ Section 3 we prove a partial converse of the last result in the case
$q=1.$

\begin{remark}
\label{ObsGaPartqv} On the lines of Remark \ref{ObsGaPart} we have the
following observations for the quadratic variation of the continuous part in
(\ref{QVLP}):

(a) When $\mathcal{A}\Theta^{\ast}=\Theta,$ $\left[  B_{\mathcal{A}%
},B_{\mathcal{A}}^{\ast}\right]  (t)=qt\mathrm{I}_{d}$. This follows from
(\ref{CovCB}) since $B_{\mathcal{A}}(t)$ is a standard complex $d\times q$
matrix Brownian motion.

(b) When $\mathcal{A}\Theta^{\ast}=\Sigma_{1}\Theta \Sigma_{2}$ for $\Sigma
_{1}\in$ $\overline{\mathbb{H}}_{d}^{+}$ and $\Sigma_{2}\in \overline
{\mathbb{H}}_{q}^{+}$, we have $B_{\mathcal{A}}(t)=\Sigma_{1}^{1/2}%
B(t)\Sigma_{2}^{1/2}$ where $B=\left \{  B(t)\right \}  _{t\geq0}$ is a standard
complex $d\times q$ matrix Brownian motion. Then, using (\ref{CovBil}) we
have
\[
\left[  B_{\mathcal{A}},B_{\mathcal{A}}^{\ast}\right]  (t)=\left[  \Sigma
_{1}^{1/2}B\Sigma_{2}^{1/2},\Sigma_{2}^{1/2}B^{\ast}\Sigma_{1}^{1/2}\right]
(t)=\Sigma_{1}^{1/2}\left[  B\Sigma_{2}^{1/2},\Sigma_{2}^{1/2}B^{\ast}\right]
(t)\Sigma_{1}^{1/2}=t\mathrm{tr}(\Sigma_{2})\Sigma_{1},
\]
where we have also used the easily checked fact $\left[  B\Sigma_{2}%
^{1/2},\Sigma_{2}^{1/2}B^{\ast}\right]  (t)=t\mathrm{tr}(\Sigma_{2})I_{d}$.

(c) When $q=d$ and $\mathcal{A}\Theta^{\ast}=$ $\mathrm{tr}(\Theta
)\mathrm{I}_{d}$, we have $\left[  B_{\mathcal{A}},B_{\mathcal{A}}^{\ast
}\right]  (t)=t\mathrm{I}_{d}$ since $B_{\mathcal{A}}(t)=b(t)\mathrm{I}_{d}$
where $b=\left \{  b(t)\right \}  _{t\geq0}$ is a one-dimensional Brownian motion.
\end{remark}

The extension of the notion of a real subordinator to the matrix case relies
on cones. A cone $K$ is a nonempty, closed, convex subset of $\mathbb{M}%
_{d\times q}$ such that if $A\in K$ and $\alpha \geq0$ imply $\alpha A\in K$. A
cone $K$ determines a partial order in $\mathbb{M}_{d\times q}$ by defining
$V_{1}\leq_{K}V_{2}$ for $V_{1},V_{2}\in \mathbb{M}_{d\times q}$ whenever
$V_{2}-V_{1}\in K$. A $\mathbb{M}_{d\times q}$-valued L\'{e}vy process
$L=\left \{  L(t)\right \}  _{t\geq0}$ is $K$- increasing if $L(t_{1})\leq
_{K}L(t_{2})$ for every $t_{1}<t_{2}$ almost surely. A $K$-increasing L\'{e}vy
process with values in $\mathbb{M}_{d\times q}$ is called a matrix
subordinator. It is easy to see that if $L=\left \{  L(t)\right \}  _{t\geq0}$
is a L\'{e}vy process in $\mathbb{M}_{d\times q}$ then $L$ is a subordinator
if and only if $L$ takes values in $K$. In this sense the matrix quadratic
variation L\'{e}vy process in (\ref{QVLP}) with values in the cone
$\overline{\mathbb{H}}_{d}^{+}$ is a matrix subordinator$.$

\paragraph{Approximation of L\'{e}vy processes}

The following are useful results on the sample path approximation of complex
matrix L\'{e}vy processes; see \cite[Th 15.17]{Ka} and \cite[Th. 8.7]{Sato1}.
They follow from their corresponding real vector case by the usual
identification of $\mathbb{M}_{d\times q}\rightarrow$ $\mathbb{R}^{2dq}$ via
$A\rightarrow \mathrm{vec}(A),$ $A\in$ $\mathbb{M}_{d\times q}$ and the fact
that $\mathrm{tr}\left(  A^{\ast}B\right)  =\mathrm{vec}(A)^{\ast}%
\mathrm{vec}(B)$, where $\mathrm{vec}(A)$ is the $dq$ column complex vector
obtained by stacking the columns of $A$ one down each other.

\begin{proposition}
\label{convproc}Let $L$ and $L^{n}$ $n=1,2,...$ be complex matrix L\'{e}vy
processes in $\mathbb{M}_{d\times q}$ with $L^{n}(1)\allowbreak \overset
{\mathcal{L}}{\rightarrow}L(1)$. Then there exist processes $\tilde{L}^{n}$
with the same distribution that $L^{n}$ such that%
\[
\sup_{0\leq s\leq t}\left \vert \tilde{L}^{n}(s)-L(s)\right \vert \overset{\Pr
}{\longrightarrow}0,\quad \forall t\geq0.
\]

\end{proposition}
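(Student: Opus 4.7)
The plan is to reduce the claim to the corresponding result for $\mathbb{R}^{N}$-valued L\'{e}vy processes, which is exactly what the authors indicate. First I would fix the isomorphism $\Phi:\mathbb{M}_{d\times q}\to\mathbb{R}^{2dq}$ defined by $\Phi(A)=(\operatorname{Re}\mathrm{vec}(A),\operatorname{Im}\mathrm{vec}(A))$. Using the identity $\mathrm{tr}(A^{\ast}B)=\mathrm{vec}(A)^{\ast}\mathrm{vec}(B)$ recalled just before the proposition, $\Phi$ is a linear bijection that is an isometry between the Frobenius norm on $\mathbb{M}_{d\times q}$ and the Euclidean norm on $\mathbb{R}^{2dq}$. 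In particular, convergence in distribution, equality in distribution, and uniform convergence in probability on compact time intervals all transfer across $\Phi$ without loss.

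Next I would observe that if $L=\{L(t)\}_{t\geq 0}$ is a L\'{e}vy process in $\mathbb{M}_{d\times q}$, then $\Phi\circ L$ is a L\'{e}vy process in $\mathbb{R}^{2dq}$: linearity of $\Phi$ preserves stationarity and independence of increments, and continuity of $\Phi$ preserves stochastic continuity and c\`{a}dl\`{a}g paths. The same applies to each $L^{n}$, producing a sequence of $\mathbb{R}^{2dq}$-valued L\'{e}vy processes $\Phi\circ L^{n}$ with $(\Phi\circ L^{n})(1)\overset{\mathcal{L}}{\longrightarrow}(\Phi\circ L)(1)$.

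Now I would invoke the classical real vector version of the result, either \cite[Th.\ 15.17]{Ka} or \cite[Th.\ 8.7]{Sato1}, applied to the $\mathbb{R}^{2dq}$-valued sequence. This produces processes $Y^{n}$ with the same distribution as $\Phi\circ L^{n}$ such that $\sup_{0\leq s\leq t}\lvert Y^{n}(s)-(\Phi\circ L)(s)\rvert\to 0$ in probability for every $t\geq 0$. Setting $\tilde{L}^{n}:=\Phi^{-1}\circ Y^{n}$, the isometry property of $\Phi$ gives $\tilde{L}^{n}\overset{\mathcal{L}}{=}L^{n}$ as processes and $\sup_{0\leq s\leq t}\lVert\tilde{L}^{n}(s)-L(s)\rVert=\sup_{0\leq s\leq t}\lvert Y^{n}(s)-(\Phi\circ L)(s)\rvert\overset{\Pr}{\longrightarrow}0$, yielding the conclusion.

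There is no real obstacle here beyond bookkeeping: the content of the proposition is the cited Euclidean result, and the only thing to check carefully is that $\Phi$ intertwines the L\'{e}vy process structure and the two modes of convergence involved, which is immediate from its linearity and isometric nature. The short write-up will consist essentially of naming $\Phi$, transferring the hypothesis, quoting \cite[Th.\ 15.17]{Ka} or \cite[Th.\ 8.7]{Sato1}, and transferring back.
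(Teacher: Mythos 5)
Your proposal is correct and follows exactly the route the paper indicates: the paper gives no separate proof but states that Propositions \ref{convproc} and \ref{convternas} ``follow from their corresponding real vector case by the usual identification of $\mathbb{M}_{d\times q}\rightarrow \mathbb{R}^{2dq}$ via $A\rightarrow \mathrm{vec}(A)$,'' citing \cite[Th.\ 15.17]{Ka} and \cite[Th.\ 8.7]{Sato1}. Your write-up simply makes explicit the bookkeeping (isometry, preservation of the L\'{e}vy property and of the modes of convergence) that the authors leave implicit.
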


\begin{proposition}
\label{convternas}Let $M^{n},n=1,2,...$ be infinitely divisible random
matrices in $\mathbb{M}_{d\times q}$ with triplet $(\mathcal{A}^{n},\nu
^{n},\Psi^{n})$. Let $M$ be a random matrix in $\mathbb{M}_{d\times q}$. Then
$M^{n}\overset{\mathcal{L}}{\rightarrow}M$ if and only if $M$ is infinitely
divisible whose triplet $(\mathcal{A},\nu,\Psi)$ satisfies the following three
conditions:\newline a) If $f:\mathbb{M}_{d\times q}\rightarrow \mathbb{M}%
_{d\times q}$ is bounded and continuous function vanishing in a neighborhood
of $0$ then%
\[
\lim_{n\rightarrow \infty}\int \nolimits_{\mathbb{M}_{d\times q}}f(\xi)\nu
^{n}(\mathrm{d}\xi)=\int \nolimits_{\mathbb{M}_{d\times q}}f(\xi)\nu
(\mathrm{d}\xi)\text{.}%
\]
b) Define the positive symmetric operator $\mathcal{A}^{n,\epsilon}%
:\mathbb{M}_{q\times d}\rightarrow \mathbb{M}_{d\times q}$ by%
\[
\mathrm{tr}\left(  \Theta^{\ast}\mathcal{A}^{n,\epsilon}\Theta^{\ast}\right)
=\mathrm{tr}\left(  \Theta^{\ast}\mathcal{A}^{n}\Theta^{\ast}\right)
+\int \nolimits_{\left \Vert \xi \right \Vert \leq \varepsilon}\left \vert
\mathrm{tr}\left(  \Theta^{\ast}\xi \right)  \right \vert ^{2}\nu_{n}%
(\mathrm{d}\xi)\quad \text{for }\Theta \in \mathbb{M}_{d\times q}\text{.}%
\]
Then
\[
\lim_{\varepsilon \downarrow0}\limsup_{n\rightarrow \infty}\left \vert
\mathrm{tr}\left(  \Theta^{\ast}\mathcal{A}^{n,\epsilon}\Theta^{\ast}\right)
-\mathrm{tr}\left(  \Theta^{\ast}\mathcal{A}\Theta^{\ast}\right)  \right \vert
=0,\quad \text{for }\Theta \in \mathbb{M}_{d\times q}\text{.}%
\]
c) $\Psi^{n}\rightarrow \Psi$.
\end{proposition}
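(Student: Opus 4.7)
The plan is to reduce the proposition to the classical convergence-of-triplets theorem for $\mathbb{R}^{2dq}$-valued infinitely divisible distributions (Kallenberg Th.~15.14 / Sato Th.~8.7, as the authors indicate) via the $\mathbb{R}$-linear isometry
\[
\iota:\mathbb{M}_{d\times q}(\mathbb{C})\longrightarrow \mathbb{R}^{2dq},\qquad A\longmapsto (\operatorname{Re}\mathrm{vec}(A),\ \operatorname{Im}\mathrm{vec}(A)),
\]
which sends the Frobenius inner product $\operatorname{Re}\mathrm{tr}(A^{\ast}B)$ to the standard Euclidean inner product, and in particular maps the Frobenius unit ball to the Euclidean unit ball.

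First I would translate the matrix L\'{e}vy--Khintchine data $(\mathcal{A},\nu,\Psi)$ of $M$ into a classical real triplet $(\tilde{A},\tilde{\nu},\tilde{\Psi})$ of $\iota(M)$: the pushforward $\tilde{\nu}=\iota_{\ast}\nu$ is a L\'{e}vy measure on $\mathbb{R}^{2dq}$, $\tilde{\Psi}=\iota(\Psi)$, and $\tilde{A}$ is the $2dq\times 2dq$ real symmetric nonnegative matrix representing the quadratic form $\Theta\mapsto \mathrm{tr}(\Theta^{\ast}\mathcal{A}\Theta^{\ast})$ in the coordinates given by $\iota$. Because $\iota$ is an isometry, the truncation regions $\{\Vert V\Vert\le 1\}$ in (\ref{LKRgen}) match exactly, and (\ref{LKRgen}) becomes, upon applying $\iota$, the classical L\'{e}vy--Khintchine formula on $\mathbb{R}^{2dq}$ evaluated at $\iota(\Theta)$. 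Infinite divisibility of $M$ in $\mathbb{M}_{d\times q}$ is equivalent to infinite divisibility of $\iota(M)$ in $\mathbb{R}^{2dq}$, and $M^{n}\overset{\mathcal{L}}{\to}M$ iff $\iota(M^{n})\overset{\mathcal{L}}{\to}\iota(M)$.

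Second I would invoke the classical theorem, which yields the convergence iff $\iota(M)$ is infinitely divisible with triplet satisfying: (a$'$) $\int f\,\mathrm{d}\tilde{\nu}^{n}\to \int f\,\mathrm{d}\tilde{\nu}$ for every bounded continuous $f$ vanishing in a neighbourhood of $0$; (b$'$) $\lim_{\varepsilon\downarrow 0}\limsup_{n}\bigl|\langle \tilde{\theta},\tilde{A}^{n,\varepsilon}\tilde{\theta}\rangle-\langle \tilde{\theta},\tilde{A}\tilde{\theta}\rangle\bigr|=0$ for every $\tilde{\theta}\in\mathbb{R}^{2dq}$, where $\tilde{A}^{n,\varepsilon}$ is the Gaussian covariance plus the small-jump second-moment matrix on $\{\Vert\xi\Vert\le\varepsilon\}$; and (c$'$) $\tilde{\Psi}^{n}\to\tilde{\Psi}$.

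Third I would translate (a$'$)--(c$'$) back through $\iota^{-1}$. Conditions (a) and (c) are immediate by change of variables under $\iota$ and by $\mathbb{R}$-linearity, respectively. For (b), the identity to verify is
\[
\langle \iota(\Theta),\tilde{A}^{n,\varepsilon}\iota(\Theta)\rangle=\mathrm{tr}(\Theta^{\ast}\mathcal{A}^{n,\varepsilon}\Theta^{\ast}),
\]
which reduces, on the small-jump region, to $\langle \iota(\Theta),\iota(\xi)\rangle^{2}=|\mathrm{tr}(\Theta^{\ast}\xi)|^{2}$ (valid because $\iota$ is an isometry) together with the analogous identity for the Gaussian parts, and then (b) follows pointwise in $\Theta$ from (b$'$). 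The only genuine work is this bookkeeping match of quadratic forms; the main obstacle is purely notational, namely keeping track of conjugates and the convention that $\mathcal{A}$ is an operator acting on $\mathbb{M}_{q\times d}$ rather than a matrix acting on $\mathbb{M}_{d\times q}$. Once the dictionary is established, the proposition is a direct consequence of the real-vector case.
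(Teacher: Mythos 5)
Your reduction to the real--vector case via the isometric identification of $\mathbb{M}_{d\times q}(\mathbb{C})$ with $\mathbb{R}^{2dq}$ is exactly the argument the paper intends: it states that the proposition follows from \cite[Th 15.17]{Ka} and \cite[Th. 8.7]{Sato1} by the $\mathrm{vec}$ identification and the identity $\mathrm{tr}(A^{\ast}B)=\mathrm{vec}(A)^{\ast}\mathrm{vec}(B)$, so you are on the same route and your write-up is, if anything, more explicit. One small caution: since $\langle \iota(\Theta),\iota(\xi)\rangle=\operatorname{Re}\mathrm{tr}(\Theta^{\ast}\xi)$, the claimed identity $\langle \iota(\Theta),\iota(\xi)\rangle^{2}=\left\vert \mathrm{tr}(\Theta^{\ast}\xi)\right\vert ^{2}$ only holds when the trace is real (e.g. in the Hermitian case used later); in general one must combine the classical condition (b$'$) for $\iota(\Theta)$ and $\iota(\mathrm{i}\Theta)$ to recover the modulus-squared form stated in (b) --- a bookkeeping point the paper itself glosses over.
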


\section{BGCD random matrix ensembles}

We now consider the matrix L\'{e}vy processes associated to the BGCD matrix
ensembles $(M_{d})_{d\geq1}$ mentioned in the introduction.

When $\mu$ is the standard Gaussian distribution, $M_{d}$ is a Gaussian
unitary invariant random matrix, $\Lambda(\mu)$ is the semicircle distribution
and $\left \{  M_{d}(t)\right \}  _{t\geq0}$ is the Hermitian matrix valued
process given by $M_{d}(t)=\left(  1/\sqrt{d+1}\right)  (B(t)+dg(t)\mathrm{I}%
_{d})$ where $B\left(  t\right)  $ is a $d\times d$ Hermitian matrix Brownian
motion independent of the one-dimensional Brownian motion $g(t);$ see
\cite[Remark 3.5]{BG}.

Likewise, if $\mu$ is the Poisson distribution with parameter $\lambda>0,$
$\left \{  M_{d}(t)\right \}  _{t\geq0}$ is the $d\times d$ matrix compound
Poisson process $M_{d}(t)=\sum_{k=1}^{N(t)}u_{k}^{d}u_{k}^{d\ast}$ where
$\left \{  u_{k}^{d}\right \}  _{k\geq1}$ is a sequence of independent uniformly
distributed random vectors on the unit sphere of $\mathbb{C}^{d}$ independent
of the Poisson process $\left \{  N(t)\right \}  _{t\geq0}$, and $\Lambda(\mu)$
is the Marchenko-Pastur distribution of parameter $\lambda>0$; see
\cite[Remark 3.2]{BG}. We observe that in this case $\left \{  M_{d}%
(t)\right \}  _{t\geq0}$ is a matrix covariation (quadratic) process rather
than a covariance matrix process as in the Wishart or other empirical
covariance processes.

Proposition \ref{polar} below collects computations in \cite{BG}, \cite{CD}
and \cite{DRA} to summarize the L\'{e}vy triplet of a general BGCD matrix
ensemble in an explicit manner. Let $\nu|_{(0,\infty)}$ and $\nu
|_{(-\infty,0)}$ denote the corresponding restrictions to $\left(
0,+\infty \right)  $ and $\left(  -\infty,0\right)  $ for any L\'{e}vy measure
$\nu$, respectively.

\begin{proposition}
\label{polar}Let $\mu \ $be an infinitely divisible distribution in
$\mathbb{R}$ with L\'{e}vy triplet $(a^{2}$,$\mathcal{\psi},\nu)$ and let
$(M_{d})_{d\geq1}$ be a BGCD matrix ensemble for $\Lambda(\mu)$. Then, for
each $d\geq1$ $M_{d}$ has the L\'{e}vy-Khintchine representation
(\ref{LKRgen}) with L\'{e}vy triplet $(\mathcal{A}_{d},\Psi_{d},\nu_{d})$ where

a) $\Psi_{d}=\mathcal{\psi}\mathrm{I}_{d}$

b)
\begin{equation}
\mathcal{A}_{d}\Theta=a^{2}\frac{1}{d+1}(\Theta+\mathrm{tr}(\Theta
)\mathrm{I}_{d}),\quad \Theta \in \mathbb{H}_{d}. \label{GPBGCD}%
\end{equation}

c)
\begin{equation}
\nu_{d}\left(  E\right)  =d\int_{\mathbb{S}(\mathbb{H}_{d(1)})}\int
_{0}^{\infty}1_{E}\left(  rV\right)  \nu_{V}\left(  \mathrm{d}r\right)
\Pi \left(  \mathrm{d}V\right)  \text{,\quad}E\in \mathcal{B}\left(
\mathbb{H}_{d}\backslash \left \{  0\right \}  \right)  \text{,} \label{PDBGCD}%
\end{equation}
where $\nu_{V}=\nu|_{(0,\infty)}$ or $\nu|_{(-\infty,0)}$ according to
$V\geq0$ or\ $V\leq0$ and $\Pi$ is a measure on $\mathbb{S}(\mathbb{H}%
_{d(1)})$ such that
\begin{equation}
\Pi \left(  D\right)  =\int \limits_{\mathbb{S}(\mathbb{H}_{d(1)})\cap
\overline{\mathbb{H}}_{d}^{+}}\int \limits_{\left \{  -1,1\right \}  }%
1_{D}\left(  tV\right)  \lambda \left(  \mathrm{d}t\right)  \omega_{d}\left(
\mathrm{d}V\right)  \text{,\quad}D\in \mathcal{B}\left(  \mathbb{S}%
(\mathbb{H}_{d(1)})\right)  \text{,} \label{pi}%
\end{equation}
where $\lambda$ is the spherical measure of $\nu$ and $\omega_{d}$ is the
probability measure on $\mathbb{S}(\mathbb{H}_{d(1)})\cap \overline{\mathbb{H}%
}_{d}^{+}$ induced by the transformation $u\rightarrow V=uu^{\ast}$, where $u$
is a uniformly distributed column random vector in the unit sphere of
$\mathbb{C}^{d}$.
\end{proposition}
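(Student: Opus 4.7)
The plan is to assemble the three statements from the explicit BGCD construction, which (as the authors note) is scattered across \cite{BG}, \cite{CD} and \cite{DRA}. I would begin by recalling the construction: for $\mu$ with classical triplet $(a^2,\psi,\nu)$, one builds $M_d$ as the sum of an independent Gaussian piece (coming from the Brownian part of $\mu$), a pure-drift piece $\psi \mathrm{I}_d$, and a pure-jump piece whose jumps are rank-one Hermitian matrices $r\,V$ with radial part distributed according to $\nu$ and angular part $V$ uniformly distributed on $\mathbb{S}(\mathbb{H}_{d(1)})$ in a suitable sense. Each of the three items (a), (b), (c) then corresponds to one of these three pieces of the L\'evy--It\^o decomposition of $M_d$.

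For (a), I would observe that by unitary invariance the drift $\Psi_d\in\mathbb{H}_d$ of $M_d$ must commute with every $U\in\mathbb{U}_d$, hence be a scalar multiple of $\mathrm{I}_d$. Computing the scalar reduces to a one-dimensional computation along $\mathrm{I}_d$, for which the BGCD construction gives precisely $\psi$, the drift of $\mu$; this yields $\Psi_d=\psi \mathrm{I}_d$. For (b), I would use the Gaussian case highlighted in the introduction, namely $M_d(t)=a(d+1)^{-1/2}(B(t)+\sqrt{d}\,g(t)\mathrm{I}_d)$ (with $B$ a Hermitian matrix Brownian motion independent of the one-dimensional Brownian motion $g$). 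A direct computation of $\mathbb{E}[\mathrm{tr}(\Theta^* M_d(1))^2]$ using bilinearity of the trace and the two identities $\mathbb{E}[\mathrm{tr}(\Theta^* B(1))^2]=\mathrm{tr}(\Theta^2)$ and $\mathbb{E}[\mathrm{tr}(\Theta^* g(1)\mathrm{I}_d)^2]=\mathrm{tr}(\Theta)^2$ gives $\mathrm{tr}(\Theta^*\mathcal{A}_d\Theta)=\tfrac{a^2}{d+1}(\mathrm{tr}(\Theta^2)+\mathrm{tr}(\Theta)^2)$, and matching against the definition of $\mathcal{A}_d$ produces formula \eqref{GPBGCD}.

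For (c), I would argue as follows. By construction, the jumps of $M_d$ are of the form $r\,V$ with $V\in\mathbb{S}(\mathbb{H}_{d(1)})$, and unitary invariance forces the angular distribution of $V$ to be the law of $uu^*$ (up to sign) where $u$ is uniform on the unit sphere of $\mathbb{C}^d$; this gives the measure $\omega_d$. The sign and the radial intensity are inherited from $\mu$: the radial part is distributed as $\nu|_{(0,\infty)}$ on the positive cone and as $\nu|_{(-\infty,0)}$ on the negative cone, which explains the definition of $\nu_V$. Combining the spherical measure $\lambda$ of $\nu$ (supported on $\{-1,+1\}$) with $\omega_d$ as in \eqref{pi} yields the angular measure $\Pi$ on $\mathbb{S}(\mathbb{H}_{d(1)})$, and integrating $\nu_V$ along rays then gives the formula \eqref{PDBGCD}. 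The factor $d$ in front is the normalizing constant that appears when one lifts a one-dimensional L\'evy measure to the $(d^2-1)$-dimensional sphere of rank-one Hermitian matrices via the BGCD recipe; its value is fixed by matching the empirical spectral distribution of $M_d$ to $\Lambda(\mu)$ in the large $d$ limit, as carried out in \cite{BG} and \cite{CD}.

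The main obstacle is bookkeeping rather than a deep idea: the formulas in \cite{BG}, \cite{CD} and \cite{DRA} use slightly different normalizations (spherical versus uniform, $\mathbb{R}$ versus $\mathbb{C}$ conventions, and different placements of the factor $d$), so the bulk of the work is to verify that the constant in \eqref{PDBGCD} and the $(d+1)^{-1}$ in \eqref{GPBGCD} come out correctly once these conventions are reconciled. Once that is done, the three items are merely a clean restatement of results already contained in the cited papers.
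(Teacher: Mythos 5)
Your proposal is essentially the paper's own proof: the paper disposes of the proposition in three lines by citing the first term of the L\'evy exponent on p.~635 of \cite{CD} for (a), the first example in Section II.C of \cite{CD} for (b), and \cite{DRA} for the polar decomposition in (c) --- exactly the assembly-from-references you describe --- and your sketched verifications (unitary invariance forcing a scalar drift, the Gaussian covariance computation landing on $\frac{a^{2}}{d+1}(\mathrm{tr}(\Theta^{2})+(\mathrm{tr}\,\Theta)^{2})$ as the paper itself records in (\ref{covgau}), and the lift of $\lambda$, $\nu|_{(0,\infty)}$, $\nu|_{(-\infty,0)}$ through $u\mapsto uu^{\ast}$) are consistent with it. The one slip is the coefficient of $g(t)\mathrm{I}_{d}$ in your Gaussian ensemble: with the two identities you invoke, a factor $\sqrt{d}$ there would produce $d\,(\mathrm{tr}\,\Theta)^{2}$ rather than $(\mathrm{tr}\,\Theta)^{2}$, so it must be dropped for the result to match (\ref{GPBGCD}).
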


\begin{proof}
(a) It follows from the first term in the L\'{e}vy exponent of $M_{d}$ in page
$635$ of \cite{CD}, where the notation $\Lambda_{d}(\mu)$ is used for the
distribution of $M_{d}$. For (b), the form of the covariance operator
$\mathcal{A}_{d}$ was implicitly computed in the first example in Section II.C
of \cite{CD}. Finally, the polar decomposition of the L\'{e}vy measure
(\ref{PDBGCD}) was found in \cite{DRA}.
\end{proof}

The L\'{e}vy-It\^{o} decomposition of the L\'{e}vy process associated to the
BGCD model $M_{d}$ is given by
\begin{equation}
M_{d}(t)=\mathcal{\psi}td\mathrm{I}_{d}+B_{\mathcal{A}_{d}}(t)+\int
_{[0,t]}\int_{\left \{  \left \Vert V\right \Vert \leq1\right \}  \cap
\mathbb{H}_{d(1)}}V\widetilde{J}_{d}(\mathrm{d}s\mathrm{,d}V)+\int_{[0,t]}%
\int_{\left \{  \left \Vert V\right \Vert >1\right \}  \cap \mathbb{H}_{d(1)}%
}VJ_{d}(\mathrm{d}s,\mathrm{d}V)\text{,} \label{LIDbgcd}%
\end{equation}
where $t\geq0$, $\mathcal{A}_{d}\Theta=a^{2}\frac{1}{d+1}(\Theta
+\mathrm{tr}(\Theta)\mathrm{I}_{d})$, $J_{d}(t,E)=\# \left \{  0\leq s\leq
t:\Delta M_{d}(s)\in E\right \}  =J_{d}(t,E\cap \mathbb{H}_{d(1)})$ for any
measurable $E$ $\in \mathbb{H}_{d}\backslash \{0\}$. Its quadratic variation is
obtained by (\ref{QVLP}) as the matrix subordinator%
\begin{subequations}
\begin{equation}
\left[  M_{d}\right]  (t)=a^{2}t\mathrm{I}_{d}+\int_{[0,t]}\int_{\mathbb{H}%
_{d(1)}\backslash \{0\}}VV^{\ast}J_{d}(\mathrm{d}s,\mathrm{d}V)=a^{2}%
t\mathrm{I}_{d}+\sum_{s\leq t}\Delta M_{d}(s)\left(  \Delta M_{d}(s)\right)
^{\ast}.\nonumber
\end{equation}

\end{subequations}
\begin{remark}
It is possible to obtain BGCD models of symmetric random matrices rather than
Hermitian. Indeed, slight changes in the proof of \cite[Theorem 3.1]{BG} give
for each $d\geq1$, a $d\times d$ real symmetric random matrix $M_{d}$ with
orthogonal invariant infinitely divisible matrix distribution. The asymptotic
spectral distribution of the corresponding Hermitian and symmetric ensembles
is the same, similarly as the semicircle distribution is the asymptotic
spectral distribution for the Gaussian Unitary Ensemble and Gaussian
Orthogonal Ensemble.
\end{remark}

\section{Bounded variation case}

It is well known that the quadratic variation of a one-dimensional L\'{e}vy
process is a subordinator, see \cite[Example 8.5]{CT}. The following result
gives a converse and a generalization to matrix subordinators with rank one
jumps. The one dimensional case is given in \cite[Lemma 6.5]{Se10}.

\begin{theorem}
\label{Sub} Let $L_{d}=\left \{  L_{d}(t):t\geq0\right \}  $ be a L\'{e}vy
process in $\overline{\mathbb{H}}_{d}^{+}$ whose jumps are of rank one almost
surely. Then there exists a L\'{e}vy process $X=\left \{  X(t):t\geq0\right \}
$ in $\mathbb{C}^{d}$ such that $L_{d}(t)=\left[  X\right]  (t)$.
\end{theorem}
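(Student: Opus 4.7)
The plan is to read off $X$ directly from the L\'evy--It\^o decomposition of $L_d$ and glue in an independent Brownian piece to reproduce the continuous part of the quadratic variation. Since $L_d$ is a matrix subordinator, it has bounded variation, so (\ref{LIFV}) gives
$$L_d(t) = t\Psi_0 + \sum_{s \leq t} \Delta L_d(s),$$
with each $\Delta L_d(s)\in \mathbb{H}_{d(1)}^+$ by hypothesis. Projecting onto any unit vector $v\in\mathbb{C}^d$ yields the scalar subordinator $v^*L_d(t)v = t\,v^*\Psi_0 v + \sum_{s\leq t} v^*\Delta L_d(s)v$, whose drift $v^*\Psi_0 v$ must therefore be nonnegative; hence $\Psi_0\in\overline{\mathbb{H}}_d^+$. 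In particular $\nu_{L_d}$ is concentrated on $\mathbb{H}_{d(1)}^+$ and satisfies $\int(1\wedge\|V\|)\,\nu_{L_d}(\mathrm{d}V)<\infty$.

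For the jumps, I would invoke Remark~\ref{Decomp}(a): the map $\phi\colon C_+^d\setminus\{0\}\to\mathbb{H}_{d(1)}^+\setminus\{0\}$, $\phi(x)=xx^*$, is a bijection with measurable inverse (the phase being fixed by $x_1\geq 0$). Define the L\'evy measure $\nu_X:=\phi_*^{-1}\nu_{L_d}$ on $\mathbb{C}^d$, and let $J_X$ be the pushforward of $J_{L_d}$ under $\phi^{-1}$, so that every atom $(s,\,x_s x_s^*)$ of $J_{L_d}$ becomes an atom $(s,\,x_s)$ of $J_X$. The identity $\|xx^*\|_F=\|x\|^2$ for rank-one positive matrices translates $\int(1\wedge\|V\|)\,\nu_{L_d}(\mathrm{d}V)<\infty$ into $\int(1\wedge\|x\|^2)\,\nu_X(\mathrm{d}x)<\infty$, so $\nu_X$ is a valid L\'evy measure on $\mathbb{C}^d$.

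For the continuous part, let $\beta$ be a $\mathbb{C}^d$-valued standard Brownian motion independent of $J_X$, and set $B(t):=\Psi_0^{1/2}\beta(t)$; combining (\ref{CovBil}) with (\ref{CovCB}) in the case $q=1$ gives $[B,B^*](t)=\Psi_0^{1/2}(t\mathrm{I}_d)\Psi_0^{1/2}=t\Psi_0$. Now define $X$ through its L\'evy--It\^o decomposition
$$X(t):=B(t)+\int_{[0,t]}\int_{\|x\|\leq 1} x\,\widetilde J_X(\mathrm{d}s,\mathrm{d}x)+\int_{[0,t]}\int_{\|x\|>1} x\,J_X(\mathrm{d}s,\mathrm{d}x),$$
a $\mathbb{C}^d$-valued L\'evy process (with zero drift, say). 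Formula (\ref{QVLP}) with $q=1$ then gives
$$[X](t)=[B,B^*](t)+\sum_{s\leq t}\Delta X(s)\Delta X(s)^*=t\Psi_0+\sum_{s\leq t}x_sx_s^*=L_d(t),$$
as required. The only real obstacle is the L\'evy-measure integrability check for $\nu_X$, which collapses to the bounded-variation condition for $L_d$ via $\|xx^*\|_F=\|x\|^2$; measurable selection of $\phi^{-1}$ is handled cleanly by the canonical set $C_+^d$ of Remark~\ref{Decomp}(a).
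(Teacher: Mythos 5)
Your proposal is correct and follows essentially the same route as the paper's own proof: reduce to the bounded-variation L\'evy--It\^o form (\ref{LIFV}), push the jump measure forward through the bijection $V=xx^{\ast}$ with $x\in C_{+}^{d}$ from Remark \ref{Decomp}(a), check the L\'evy-measure integrability via $\left\Vert xx^{\ast}\right\Vert =\left\vert x\right\vert ^{2}$, and add an independent Brownian part $\Psi_{0}^{1/2}\beta$ to account for the drift $t\Psi_{0}$ in the quadratic variation. Your explicit verification that $\Psi_{0}\in\overline{\mathbb{H}}_{d}^{+}$ by projecting onto unit vectors is a small but welcome addition that the paper leaves implicit.
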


\begin{proof}
We construct $X$ as a L\'{e}vy-It\^{o} decomposition realization. Using
(\ref{LIFV}), for each $d\geq1$, $L_{d}$ is an $\overline{\mathbb{H}}_{d}^{+}%
$-process of bounded variation with L\'{e}vy-It\^{o} decomposition%
\[
L_{d}(t)=\Psi_{0}t+\int \nolimits_{\left[  0,t\right]  }\int_{\mathbb{H}%
_{d\left(  1\right)  }^{+}\backslash \{0\}}VJ_{L_{d}}(\mathrm{d}s,\mathrm{d}%
V)\text{, }t\geq0\text{,}%
\]
where $\Psi_{0}\in \mathbb{H}_{d}^{+}$ and $J_{L_{d}}$ is the Poisson random
measure of $L_{d}$. Let $Leb\otimes \nu_{L_{d}}$ denote the intensity measure
of $J_{L_{d}}$.

Consider the cone $C_{+}^{d}=\left \{  x=\left(  x_{1},x_{2},\ldots
,x_{d}\right)  :x_{1}\geq0,\text{ }x_{j}\in \mathbb{C},\text{ }%
j=2,...,d\right \}  $ and let $\varphi_{+}:\mathbb{R}_{+}\times \mathbb{H}%
_{d(1)}^{+}\rightarrow \mathbb{R}_{+}\times C_{+}^{d}$ be defined as
$\varphi_{+}\left(  t,V\right)  =(t,x)$ where $V=xx^{\ast}$ and $x\in
C_{+}^{d}$. Let $\overline{\varphi}_{+}:\mathbb{H}_{d(1)}^{+}\rightarrow
C_{+}^{d}$ be defined by $\overline{\varphi}_{+}\left(  V\right)  =x$ for
$V=xx^{\ast}$ and $x\in C_{+}^{d}$. By Remark \ref{Decomp} (a) the functions
$\varphi_{+}$ and $\overline{\varphi}_{+}$ are well defined.

Let us define $J(\mathrm{d}s,\mathrm{d}x)=\left(  J_{L_{d}}\circ \varphi
_{+}^{-1}\right)  \left(  \mathrm{d}s,\mathrm{d}x\right)  $ the random measure
induced by the transformation $\varphi_{+}$ which is a Poisson random measure
on $\mathbb{R}_{+}\times C_{+}^{d}$. Observe that $\mathbb{E}\left[
J(t,F)\right]  =\mathbb{E}\left[  J_{L_{d}}\circ \varphi_{+}^{-1}\left(
\left \{  t\right \}  \times F\right)  \right]  =t\nu_{L_{d}}\left(
\overline{\varphi}_{+}\left(  F\right)  \right)  =t\left(  \nu_{L_{d}}%
\circ \overline{\varphi}_{+}^{-1}\right)  \left(  F\right)  $ for
$F\allowbreak \in \allowbreak \mathcal{B(}\allowbreak C_{+}^{d}\allowbreak
\backslash \left \{  0\right \}  )$. Let us denote $\nu=\nu_{L_{d}}\circ
\overline{\varphi}_{+}^{-1}$ which is a L\'{e}vy measure on $C_{+}^{d}$ since%
\[
\int_{C_{+}^{d}\backslash \left \{  0\right \}  }\left(  1\wedge \left \vert
x\right \vert ^{2}\right)  \nu(\mathrm{d}x)=\int_{C_{+}^{d}\backslash \left \{
0\right \}  }\left(  1\wedge \left \vert x\right \vert ^{2}\right)  \nu_{L_{d}%
}\circ \overline{\varphi}_{+}^{-1}(\mathrm{d}x)
\]%
\[
=\int_{C_{+}^{d}\backslash \left \{  0\right \}  }\left(  1\wedge \mathrm{tr}%
\left(  xx^{\ast}\right)  \right)  \nu_{L_{d}}\circ \overline{\varphi}_{+}%
^{-1}(\mathrm{d}x)=\int_{\mathbb{H}_{d(1)}^{+}\backslash \left \{  0\right \}
}\left(  1\wedge \mathrm{tr}\left(  V\right)  \right)  \left(  \nu_{L_{d}}%
\circ \overline{\varphi}_{+}^{-1}\right)  \circ f^{-1}(\mathrm{d}V)
\]%
\[
=\int_{\mathbb{H}_{d(1)}^{+}\backslash \left \{  0\right \}  }\left(
1\wedge \mathrm{tr}\left(  V\right)  \right)  \nu_{L_{d}}(\mathrm{d}%
V)<\infty \text{,}%
\]
where $\left(  \nu_{L_{d}}\circ \overline{\varphi}_{+}^{-1}\right)  \circ
f^{-1}=\nu_{L_{d}},$ with $f\left(  x\right)  =xx^{\ast}$ and we have used
that $\mathrm{tr}\left(  V\right)  \leq \alpha \left \Vert V\right \Vert $ for
some constant $\alpha>0$. Thus $Leb\otimes \nu$ is the intensity measure of the
Poisson random measure $J$.

Let us take the L\'{e}vy process in $\mathbb{C}^{d}$%
\begin{equation}
X(t)=\left \vert \Psi_{0}\right \vert ^{1/2}B_{I}(t)+\int \nolimits_{\left[
0,t\right]  }\int_{\mathbb{C}^{d}\cap \{ \left \vert x\right \vert \leq
1\}}x\widetilde{J}(\mathrm{d}s,\mathrm{d}x)+\int \nolimits_{\left[  0,t\right]
}\int_{\mathbb{C}^{d}\cap \{ \left \vert x\right \vert >1\}}xJ(\mathrm{d}%
s,\mathrm{d}x)\text{, }t\geq0\text{,} \label{LIX}%
\end{equation}
where $B_{I}$ is a $\mathbb{C}^{d}$-valued standard Brownian motion with
quadratic variation $tI_{d}$, (i.e. (\ref{CovCB}) with $q=1$). Thus the
quadratic variation of $X$ is given by%
\[
\left[  X\right]  (t)=\left[  \left \vert \Psi_{0}\right \vert ^{1/2}B_{I}%
,B_{I}^{\ast}\left \vert \Psi_{0}\right \vert ^{1/2}\right]  (t)+\int
\nolimits_{\left[  0,t\right]  }\int_{\mathbb{C}^{d}\backslash \{0\}}xx^{\ast
}J(\mathrm{d}s,\mathrm{d}x)
\]%
\[
=\Psi_{0}t+\int \nolimits_{\left[  0,t\right]  }\int_{\mathbb{C}^{d}%
\backslash \{0\}}xx^{\ast}J_{L_{d}}\circ \varphi_{+}^{-1}(\mathrm{d}%
s,\mathrm{d}x)=\Psi_{0}t+\int \nolimits_{\left[  0,t\right]  }\int
_{\mathbb{H}_{d(1)}^{+}\backslash \left \{  0\right \}  }VJ_{L_{d}}\circ
\varphi_{+}^{-1}\circ h^{-1}(\mathrm{d}s,\mathrm{d}V)
\]%
\[
=\Psi_{0}t+\int \nolimits_{\left[  0,t\right]  }\int_{\mathbb{H}_{d(1)}%
^{+}\backslash \left \{  0\right \}  }VJ_{L_{d}}(\mathrm{d}s,\mathrm{d}%
V)=L_{d}(t),
\]
where $J_{L_{d}}\circ \varphi_{+}^{-1}\circ h^{-1}=J_{L_{d}},$ with $h\left(
t,x\right)  =\left(  t,xx^{\ast}\right)  .$
\end{proof}

\smallskip

For the general bounded variation case we have the following Wiener-Hopf type decomposition.

\begin{theorem}
\label{Bdv}Let $L_{d}=\left \{  L_{d}(t):t\geq0\right \}  $ be a L\'{e}vy
process in $\mathbb{H}_{d}$ of bounded variation whose jumps are of rank one
almost surely. Then there exist L\'{e}vy processes $X=\left \{  X(t):t\geq
0\right \}  $ and $Y=\left \{  Y(t):t\geq0\right \}  $ in $\mathbb{C}^{d}$ such
that
\begin{equation}
L_{d}(t)=\left[  X\right]  (t)-\left[  Y\right]  (t). \label{WHTD}%
\end{equation}
Moreover, $\left \{  \left[  X\right]  (t):t\geq0\right \}  $ and $\left \{
\left[  Y\right]  (t):t\geq0\right \}  $ are independent processes.
\end{theorem}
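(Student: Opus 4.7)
The plan is to decompose $L_d$ into a "positive rank-one" part and a "negative rank-one" part and then apply Theorem \ref{Sub} to each piece separately. The key observation is that every nonzero rank-one Hermitian matrix is either nonnegative definite or nonpositive definite: indeed, by Remark \ref{Decomp}(b), any $V\in\mathbb{H}_{d(1)}$ has the form $\lambda uu^{\ast}$ with $\lambda\in\mathbb{R}\setminus\{0\}$, so $V\in\mathbb{H}_{d(1)}^{+}$ when $\lambda>0$ and $-V\in\mathbb{H}_{d(1)}^{+}$ when $\lambda<0$. Consequently $\mathbb{H}_{d(1)}\setminus\{0\}$ is the disjoint union of $\mathbb{H}_{d(1)}^{+}\setminus\{0\}$ and $-(\mathbb{H}_{d(1)}^{+}\setminus\{0\})$.

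First I would write the Lévy-Itô decomposition (\ref{LIFV}) of $L_{d}$, so that
\[
L_{d}(t)=t\Psi_{0}+\int_{[0,t]}\int_{\mathbb{H}_{d(1)}\setminus\{0\}} V\,J_{L_{d}}(\mathrm{d}s,\mathrm{d}V),
\]
with $\Psi_{0}\in\mathbb{H}_{d}$. I would split the Poisson random measure as $J_{L_d}=J^{+}+J^{-}$, where $J^{+}$ (respectively $J^{-}$) is the restriction of $J_{L_{d}}$ to $\mathbb{R}_{+}\times(\mathbb{H}_{d(1)}^{+}\setminus\{0\})$ (respectively $\mathbb{R}_{+}\times(-\mathbb{H}_{d(1)}^{+}\setminus\{0\})$). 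Since the two domains are disjoint, $J^{+}$ and $J^{-}$ are \emph{independent} Poisson random measures. Using the spectral theorem on the Hermitian matrix $\Psi_{0}$, I would write $\Psi_{0}=\Psi_{0}^{+}-\Psi_{0}^{-}$ with $\Psi_{0}^{\pm}\in\overline{\mathbb{H}}_{d}^{+}$ (the positive and negative parts given by the nonnegative and nonpositive eigenvalues).

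Next I would define the two processes
\[
A(t)=t\Psi_{0}^{+}+\int_{[0,t]}\int_{\mathbb{H}_{d(1)}^{+}\setminus\{0\}} V\,J^{+}(\mathrm{d}s,\mathrm{d}V),\qquad B(t)=t\Psi_{0}^{-}+\int_{[0,t]}\int_{\mathbb{H}_{d(1)}^{+}\setminus\{0\}} V\,\widetilde{J}^{-}(\mathrm{d}s,\mathrm{d}V),
\]
where $\widetilde{J}^{-}$ denotes the image of $J^{-}$ under $V\mapsto -V$, so that $\widetilde{J}^{-}$ lives on $\mathbb{R}_{+}\times(\mathbb{H}_{d(1)}^{+}\setminus\{0\})$. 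Both $A$ and $B$ are then Lévy processes in $\overline{\mathbb{H}}_{d}^{+}$ with rank-one jumps, and one checks that $A(t)-B(t)=L_{d}(t)$ by collecting the two Poisson integrals into $\int V\,J_{L_{d}}(\mathrm{d}s,\mathrm{d}V)$ and the drifts into $t\Psi_0$. Theorem \ref{Sub} applied separately to $A$ and $B$ produces $\mathbb{C}^{d}$-valued Lévy processes $X$ and $Y$ with $[X](t)=A(t)$ and $[Y](t)=B(t)$, which gives (\ref{WHTD}).

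The remaining point is independence of $[X]$ and $[Y]$. Since the constructions are deterministic functionals of $J^{+}$ and $J^{-}$ respectively (the drifts $\Psi_{0}^{\pm}$ are nonrandom), and since $J^{+}$ and $J^{-}$ are independent Poisson random measures by the disjointness of their supports, $A=[X]$ and $B=[Y]$ are independent processes. The main obstacle to check carefully is the measurability-and-uniqueness step underlying the rank-one sign dichotomy, i.e.\ that the L\'{e}vy measure $\nu_{L_{d}}$ is supported on $\mathbb{H}_{d(1)}\setminus\{0\}=(\mathbb{H}_{d(1)}^{+}\setminus\{0\})\sqcup(-\mathbb{H}_{d(1)}^{+}\setminus\{0\})$, so that the decomposition of $J_{L_{d}}$ into $J^{+}+J^{-}$ exhausts all the jumps; this is precisely the rank-one jump hypothesis together with Remark \ref{Decomp}(b).
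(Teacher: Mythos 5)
Your proposal is correct and follows essentially the same route as the paper: both decompose $L_{d}$ into a nonnegative-definite and a nonpositive-definite rank-one part by splitting the drift $\Psi_{0}=\Psi_{0}^{+}-\Psi_{0}^{-}$ and the jump measure according to the sign of the single nonzero eigenvalue, and both deduce independence of $\left[X\right]$ and $\left[Y\right]$ from the disjointness of the supports of the two resulting Poisson random measures. The only difference is presentational: you invoke Theorem \ref{Sub} as a black box on each piece, whereas the paper inlines that construction (using maps $V\mapsto V^{\pm}$ rather than restriction of $J_{L_{d}}$, which amounts to the same thing), and the paper additionally remarks that $X$ and $Y$ themselves are not independent since they share the Brownian part.
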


\begin{proof}
For each $d\geq1$, $L_{d}$ is an $\mathbb{H}_{d}$-process of bounded variation
with L\'{e}vy-It\^{o} decomposition%
\begin{equation}
L_{d}(t)=\Psi t+\int \nolimits_{\left[  0,t\right]  }\int_{\mathbb{H}%
_{d(1)}\backslash \{0\}}VJ_{L_{d}}(\mathrm{d}s,\mathrm{d}V)\text{, }%
t\geq0\text{,} \label{LIDBV}%
\end{equation}
where $\Psi \in \mathbb{H}_{d}$ and $J_{L_{d}}$ is the Poisson random measure of
$L_{d}$. Let $Leb\otimes \nu_{L_{d}}$ denote the intensity measure of
$J_{L_{d}}$.

First we prove that $L_{d}=L_{d}^{1}-L_{d}^{2}$ where $L_{d}^{1}$ and
$L_{d}^{2}$ are the L\'{e}vy processes in $\overline{\mathbb{H}}_{d}^{+}$
given by (\ref{Quad1decomp}) and (\ref{Quad2decomp}).\newline Every
$V\in \mathbb{H}_{d\left(  1\right)  }$ can be written as $V=\lambda uu^{\ast}$
where $\lambda$ the eigenvalue of $V$ and $u$ is a unitary vector in
$\mathbb{C}^{d}$. Let us define $\left \vert V\right \vert =\left \vert
\lambda \right \vert uu^{\ast}$ and $V^{+}=\lambda^{+}uu^{\ast}$, $V^{-}%
=\lambda^{-}uu^{\ast}$ where $\lambda^{+}=\lambda$ if $\lambda \geq0$ and
$\lambda^{-}=-\lambda$ if $\lambda<0$.

Let $\varphi_{+}:\mathbb{R}_{+}\times \mathbb{H}_{d(1)}\rightarrow
\mathbb{R}_{+}\times \mathbb{H}_{d(1)}^{+}$ and $\varphi_{-}:\mathbb{R}%
_{+}\times \mathbb{H}_{d(1)}\rightarrow \mathbb{R}_{+}\times \mathbb{H}%
_{d(1)}^{+}$ be defined as $\varphi_{+}\left(  t,V\right)  =(t,V^{+})$ and
$\varphi_{-}\left(  t,V\right)  =(t,V^{-})$ respectively. Let $\overline
{\varphi}_{+}:\mathbb{H}_{d(1)}\rightarrow \mathbb{H}_{d(1)}^{+}$ and
$\overline{\varphi}_{-}:\mathbb{H}_{d(1)}\rightarrow \mathbb{H}_{d(1)}^{+}$ be
defined as $\overline{\varphi}_{+}(V)=V^{+}$ and $\overline{\varphi}%
_{-}(V)=V^{-}$ respectively. By Remark \ref{Decomp} (b) the functions
$\varphi_{+},$ $\overline{\varphi}_{+},$ $\varphi_{-}$ and $\overline{\varphi
}_{-}$ are well defined and hence $V=\overline{\varphi}_{+}(V)-\overline
{\varphi}_{-}(V)$.

Let us define $J^{+}(\mathrm{d}s,\mathrm{d}x)=\left(  J_{L_{d}}\circ
\varphi_{+}^{-1}\right)  \left(  \mathrm{d}s,\mathrm{d}x\right)  $ and
$J^{-}(\mathrm{d}s,\mathrm{d}x)=\left(  J_{L_{d}}\circ \varphi_{-}^{-1}\right)
\left(  \mathrm{d}s,\mathrm{d}x\right)  $ the random measures induced by the
transformations $\varphi_{+}$ and $\varphi_{-}$ respectively, which are
Poisson random measures both on $\mathbb{R}_{+}\times \mathbb{H}_{d(1)}^{+}$.
Observe that $\mathbb{E}\left[  J^{+}(t,F)\right]  =\mathbb{E[}J_{L_{d}}%
\circ \allowbreak \varphi_{+}^{-1}(\allowbreak \left \{  t\right \}  \allowbreak
\times \allowbreak F)]=t\nu_{L_{d}}\left(  \overline{\varphi}_{+}^{-1}\left(
F\right)  \right)  =t\left(  \nu_{L_{d}}\circ \overline{\varphi}_{+}%
^{-1}\right)  \left(  F\right)  $ for $F\in \mathcal{B}\left(  \mathbb{H}%
_{d(1)}^{+}\backslash \left \{  0\right \}  \right)  $ and similarly
$\mathbb{E}\left[  J^{-}(t,F)\right]  =t\left(  \nu_{L_{d}}\circ
\overline{\varphi}_{-}^{-1}\right)  \left(  F\right)  $. Let us denote
$\nu_{L_{d}}^{+}=\nu_{L_{d}}\circ \overline{\varphi}_{+}^{-1}$ and $\nu_{L_{d}%
}^{-}=\nu_{L_{d}}\circ \overline{\varphi}_{-}^{-1}$. Note that\ $\nu_{L_{d}%
}^{+}$ is a L\'{e}vy measure on $\mathbb{H}_{d(1)}^{+}$ since%
\begin{align*}
\infty &  >\int_{\mathbb{H}_{d(1)}\backslash \left \{  0\right \}  }\left(
1\wedge \left \Vert V\right \Vert \right)  \nu_{L_{d}}(\mathrm{d}V)\geq
\int_{\mathbb{H}_{d(1)}\backslash \left \{  0\right \}  }\left(  1\wedge
\left \Vert \overline{\varphi}_{+}(V)\right \Vert \right)  \nu_{L_{d}%
}(\mathrm{d}V)\\
&  =\int_{\mathbb{H}_{d(1)}^{+}\backslash \left \{  0\right \}  }\left(
1\wedge \left \Vert W\right \Vert \right)  \nu_{L_{d}}^{+}(\mathrm{d}W)\text{.}%
\end{align*}
Hence $Leb\otimes \nu_{L_{d}}^{+}$ is the intensity measure of $J^{+}$.
Similarly, one can see that $Leb\otimes \nu_{L_{d}}^{-}$ is the intensity
measure of $J^{-}$.

There exist $\Psi^{+}$ and $\Psi^{-}$ in $\mathbb{H}_{d}^{+}$ such that
$\Psi=\Psi^{+}-\Psi^{-}$. Let us take the L\'{e}vy processes $X$ and $Y$ in
$\mathbb{C}^{d}$%
\[
X(t)=\left \vert \Psi^{+}\right \vert ^{1/2}B_{I}(t)+\int \nolimits_{\left[
0,t\right]  }\int_{\mathbb{C}^{d}\cap \{ \left \vert x\right \vert \leq
1\}}x\widetilde{J}^{+}(\mathrm{d}s,\mathrm{d}x)+\int \nolimits_{\left[
0,t\right]  }\int_{\mathbb{C}^{d}\cap \{ \left \vert x\right \vert >1\}}%
xJ^{+}(\mathrm{d}s,\mathrm{d}x)\text{, }t\geq0\text{,}%
\]%
\[
Y(t)=\left \vert \Psi^{-}\right \vert ^{1/2}B_{I}(t)+\int \nolimits_{\left[
0,t\right]  }\int_{\mathbb{C}^{d}\cap \{ \left \vert x\right \vert \leq
1\}}x\widetilde{J}^{-}(\mathrm{d}s,\mathrm{d}x)+\int \nolimits_{\left[
0,t\right]  }\int_{\mathbb{C}^{d}\cap \{ \left \vert x\right \vert >1\}}%
xJ^{-}(\mathrm{d}s,\mathrm{d}x)\text{, }t\geq0\text{,}%
\]
where $B_{I}$ is a $\mathbb{C}^{d}$-valued standard Brownian motion with
quadratic variation $tI_{d}$.

Observe that%
\begin{equation}
\left[  X\right]  (t)=\Psi^{+}t+\int \nolimits_{\left[  0,t\right]  }%
\int_{\mathbb{C}^{d}\backslash \{0\}}xx^{\ast}J_{+}(\mathrm{d}s,\mathrm{d}%
x)=\Psi^{+}t+\int \nolimits_{\left[  0,t\right]  }\int_{\mathbb{H}_{d(1)}%
^{+}\backslash \{0\}}VJ_{L_{d}}(\mathrm{d}s,\mathrm{d}V) \label{Quad1decomp}%
\end{equation}
and%
\begin{align}
\left[  Y\right]  (t)  &  =\Psi^{-}t+\int \nolimits_{\left[  0,t\right]  }%
\int_{\mathbb{C}^{d}\backslash \{0\}}xx^{\ast}J^{-}(\mathrm{d}s,\mathrm{d}%
x)=\Psi^{-}t-\int \nolimits_{\left[  0,t\right]  }\int_{\mathbb{C}%
^{d}\backslash \{0\}}\left(  -xx^{\ast}\right)  J_{L_{d}}(\mathrm{d}%
s,\mathrm{d}x)\nonumber \\
&  =\Psi^{-}t-\int \nolimits_{\left[  0,t\right]  }\int_{\mathbb{H}_{d(1)}%
^{-}\backslash \{0\}}VJ_{L_{d}}(\mathrm{d}s,\mathrm{d}V), \label{Quad2decomp}%
\end{align}
where $\mathbb{H}_{d(1)}^{-}$ denotes the cone of negative (nonpositive)
definite matrices of rank one in $\mathbb{H}_{d}$. The first assertion follows
from (\ref{LIDBV}). Finally, since $J_{L_{d}}$ is a Poisson random measure and
$\mathbb{H}_{d(1)}^{+}\backslash \{0\}$ and $\mathbb{H}_{d(1)}^{-}%
\backslash \{0\}$ are disjoint sets, from the last expressions in
(\ref{Quad1decomp}) and (\ref{Quad2decomp}) we have that $\left[  X\right]  $
and $\left[  Y\right]  $ are independent processes, although $X$ and $Y$ are not.
\end{proof}

Next we consider the matrix L\'{e}vy processes associated to the BGCD matrix
ensembles $(M_{d})_{d\geq1}$. We have the following two consequences of the
former results.

\begin{corollary}
\label{corBG}Let $M_{d}=\left \{  M_{d}(t):t\geq0\right \}  $ be the matrix
L\'{e}vy process associated to the BGCD random matrix ensembles.

a) Let $\mu$ be the infinitely divisible distribution with triplet $\left(
0,\psi,\nu \right)  $ associated to $M_{d}$ such that
\[
\int_{\left \vert x\right \vert \leq1}\left(  1\wedge x\right)  \nu
(\mathrm{d}x)<\infty,\  \  \nu((-\infty,0])=0\  \text{ and\ }\mathcal{\psi}%
_{0}:=\mathcal{\psi}-\int_{x\leq1}x\nu(\mathrm{d}x)\geq0.
\]
Let us consider the L\'{e}vy-It\^{o} decomposition of $M_{d}(t)$ in
$\overline{\mathbb{H}}_{d}^{+}$
\[
M_{d}(t)=\mathcal{\psi}_{0}tdI_{d}+\int \nolimits_{\left[  0,t\right]  }%
\int_{\mathbb{H}_{d(1)}^{+}\backslash \{0\}}VJ_{M_{d}}(\mathrm{d}%
s,\mathrm{d}V).
\]
Then there exists a L\'{e}vy process $X=\left \{  X(t):t\geq0\right \}  $ in
$\mathbb{C}^{d}$ such that $M_{d}(t)=\left[  X\right]  (t)$, where
\[
X(t)=\left \vert \mathcal{\psi}_{0}\right \vert ^{1/2}B_{I}(t)+\int
\nolimits_{\left[  0,t\right]  }\int_{\mathbb{C}^{d}\cap \{ \left \vert
x\right \vert \leq1\}}x\widetilde{J}(\mathrm{d}s,\mathrm{d}x)+\int
\nolimits_{\left[  0,t\right]  }\int_{\mathbb{C}^{d}\cap \{ \left \vert
x\right \vert >1\}}xJ(\mathrm{d}s,\mathrm{d}x)\text{, }t\geq0\text{,}%
\]
$B_{I}$ is a $\mathbb{C}^{d}$-valued standard Brownian motion with quadratic
variation $tI_{d}$, and the Poisson random measure $J$ is given by
$J=J_{M_{d}}\circ \varphi_{+}^{-1}$.

b) If $M_{d}$ has bounded variation then there exist L\'{e}vy processes
$X=\left \{  X(t):t\geq0\right \}  $ and $Y=\left \{  Y(t):t\geq0\right \}  $ in
$\mathbb{C}^{d}$ such that $M_{d}(t)=\left[  X\right]  (t)-\left[  Y\right]
(t),$ where $\left \{  \left[  X\right]  (t):t\geq0\right \}  $ and $\left \{
\left[  Y\right]  (t):t\geq0\right \}  $ are independent.
\end{corollary}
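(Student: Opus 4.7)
The plan is to derive the corollary as a direct consequence of Theorems \ref{Sub} and \ref{Bdv}, combined with the explicit structure of the BGCD L\'{e}vy triplet given in Proposition \ref{polar}.

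For part (a), I first verify that $M_d$ satisfies the hypotheses of Theorem \ref{Sub}, namely that it is an $\overline{\mathbb{H}}_d^+$-valued L\'{e}vy process whose jumps are of rank one almost surely. The assumption that the triplet of $\mu$ has zero Gaussian part forces $\mathcal{A}_d \equiv 0$ via (\ref{GPBGCD}), eliminating the Brownian component of $M_d$. The hypothesis $\nu((-\infty,0])=0$ combined with the polar decomposition (\ref{PDBGCD})--(\ref{pi}) (recalling that $\nu_V=\nu|_{(0,\infty)}$ when $V\geq 0$) shows that $\nu_d$ is carried by $\mathbb{H}_{d(1)}^+$, so every jump is a nonnegative-definite rank one matrix. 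The subordinator-type integrability condition on $\mu$ transfers to $\nu_d$ through the same polar formula, hence $M_d$ admits the bounded-variation L\'{e}vy-It\^{o} form (\ref{LIFV}); finally the hypothesis $\psi_0\geq 0$ places the drift in $\overline{\mathbb{H}}_d^+$. Theorem \ref{Sub} then yields a $\mathbb{C}^d$-valued L\'{e}vy process $X$ with $M_d(t)=[X](t)$, and unwinding its construction with $\Psi_0=\psi_0 d\,I_d$ and Poisson random measure $J_{L_d}=J_{M_d}$ produces exactly the $X$ displayed in the corollary.

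For part (b), the only hypothesis is that $M_d$ has bounded variation. Proposition \ref{polar} still guarantees that $\nu_d$ is carried by $\mathbb{H}_{d(1)}$, so the jumps of $M_d$ are of rank one a.s. Hence $M_d$ meets the hypotheses of Theorem \ref{Bdv}, which directly supplies the Wiener--Hopf-type decomposition $M_d(t)=[X](t)-[Y](t)$. The independence of $[X]$ and $[Y]$ is not a separate claim to prove: it is already built into the construction in Theorem \ref{Bdv}, coming from the fact that $J^{+}=J_{M_d}\circ\varphi_+^{-1}$ and $J^{-}=J_{M_d}\circ\varphi_-^{-1}$ restrict $J_{M_d}$ to the disjoint cones $\mathbb{H}_{d(1)}^+\setminus\{0\}$ and $\mathbb{H}_{d(1)}^-\setminus\{0\}$.

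The corollary is essentially a repackaging of Theorems \ref{Sub} and \ref{Bdv} in the BGCD setting, so no serious obstacle is expected. The only slightly delicate bookkeeping is matching the drift of $M_d$: Proposition \ref{polar}(a) gives the generic drift $\psi I_d$, and one must compensate the small rank one jumps to arrive at the $\psi_0 d\,I_d$ drift in the corollary, using $\int_{\mathbb{S}(\mathbb{H}_{d(1)})\cap\overline{\mathbb{H}}_d^+}V\,\omega_d(\mathrm{d}V)=I_d/d$ together with the outer factor $d$ appearing in (\ref{PDBGCD}).
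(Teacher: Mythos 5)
Your proposal is correct and matches the paper's intent exactly: the paper states the corollary as an immediate consequence of Theorems \ref{Sub} and \ref{Bdv} (with no written proof), and your argument supplies precisely the hypothesis-checking via Proposition \ref{polar} — vanishing Gaussian part, L\'{e}vy measure concentrated on $\mathbb{H}_{d(1)}^{+}$, nonnegative drift, and bounded variation — that this deduction requires.
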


\section{Covariation matrix processes approximation}

We now consider approximation of general BGCD ensembles by BGCD matrix
compound Poisson processes which are covariation of $\mathbb{C}^{d}$-valued
L\'{e}vy processes.

The following results gives realizations of BGCD ensembles of compound Poisson
type as the covariation of two $\mathbb{C}^{d}$-valued L\'{e}vy processes. Its
proof is straightforward.

\begin{proposition}
\label{BGCDCPP} Let $\mu$ be a compound Poisson distribution on $\mathbb{R}$
with L\'{e}vy measure $\nu$ and drift $\mathcal{\psi}\in \mathbb{R}$ and let
$(M_{d})_{d\geq1}$ be the BGCD matrix ensemble for $\Lambda(\mu).$ For each
$d\geq1$, assume that

i) $(\beta_{j})_{j\geq1}$ is a sequence of i.i.d. random variables with
distribution $\nu/\nu \left(  \mathbb{R}\right)  $.

ii) $(u_{j})_{j\geq1}$ is a sequence of i.i.d. random vectors with uniform
distribution on the unit sphere of $\mathbb{C}^{d}$.

iii) $\left \{  N(t)\right \}  _{t\geq0}$ is a Poisson process with parameter one.

Assume that $(\beta_{j})_{j\geq1}$, $(u_{j})_{j\geq1}$ and $\left \{
N(t)\right \}  _{t\geq0}$ are independent. Then

a) $M_{d}$ has the same distribution as $M_{d}(1)$ where
\begin{equation}
M_{d}(t)=\mathcal{\psi}tI_{d}+\sum_{j=1}^{N(t)}\beta_{j}u_{j}u_{j}^{\ast
},\quad t\geq0. \label{BGCP1}%
\end{equation}

b) $M_{d}(\cdot)=[X_{d},Y_{d}](\cdot)$ where $X_{d}=\left \{  X_{d}(t)\right \}
_{t\geq0},$ $Y_{d}=\left \{  Y_{d}(t)\right \}  _{t\geq0}$ are the
$\mathbb{C}^{d}$-valued L\'{e}vy processes%
\begin{equation}
X_{d}(t)=\sqrt{\left \vert \mathcal{\psi}\right \vert }B(t)+\sum_{j=1}%
^{N(t)}\sqrt{\left \vert \beta_{j}\right \vert }u_{j},\quad t\geq0,
\label{BGCP2}%
\end{equation}%
\begin{equation}
Y_{d}(t)=\mathrm{sign}\left(  \mathcal{\psi}\right)  \sqrt{\left \vert
\mathcal{\psi}\right \vert }B(t)+\sum_{j=1}^{N(t)}\mathrm{sign}\left(
\beta_{j}\right)  \sqrt{\left \vert \beta_{j}\right \vert }u_{j},\quad t\geq0,
\label{BGCP3}%
\end{equation}
and $B=\left \{  B(t)\right \}  _{t\geq0}$ is a $\mathbb{C}^{d}$-valued standard
Brownian motion independent of $(\beta_{j})_{j\geq1}$, $(u_{j})_{j\geq1}$ and
$\left \{  N(t)\right \}  _{t\geq0}$.
\end{proposition}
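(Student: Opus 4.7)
The plan is to establish (a) and (b) separately. For (a) I would compute the characteristic function of $M_d(1)$ from the compound Poisson representation and match it term-by-term against the L\'evy--Khintchine formula for the BGCD ensemble supplied by Proposition \ref{polar}. For (b) the approach is a direct calculation of the matrix covariation $[X_d,Y_d]$ using the bilinearity (\ref{CovBil}) together with the continuous/discontinuous splitting (\ref{ForCov}).

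For (a), the Gaussian part of the triplet vanishes since $a^{2}=0$ for a compound Poisson $\mu$, so after conditioning on $N(1)$ and using the independence of $(\beta_j)$, $(u_j)$ and $N$ I would obtain
\[
\mathbb{E}\exp(\mathrm{i}\,\mathrm{tr}(\Theta^{\ast} M_d(1))) = \exp\Bigl(\mathrm{i}\psi\,\mathrm{tr}(\Theta^{\ast}) + \int(\mathrm{e}^{\mathrm{i}r\,\mathrm{tr}(\Theta^{\ast}uu^{\ast})}-1)\,\nu(\mathrm{d}r)\,\sigma(\mathrm{d}u)\Bigr),
\]
where $\sigma$ is the uniform law on the unit sphere of $\mathbb{C}^{d}$. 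Pushing forward by $u\mapsto uu^{\ast}$ yields the measure $\omega_d$ on $\mathbb{S}(\mathbb{H}_{d(1)})\cap\overline{\mathbb{H}}_d^{+}$ from Proposition \ref{polar}, and splitting $\nu$ according to the sign of $r$ identifies the spherical measure $\lambda$ on $\{-1,1\}$; together these exhibit $\nu_d$ in the polar form (\ref{PDBGCD})--(\ref{pi}) and $\Psi_d=\psi\,\mathrm{I}_d$, matching Proposition \ref{polar}. Infinite divisibility of $M_d(1)$ then pins down the distribution.

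For (b), the processes $X_d$ and $Y_d$ share a common Brownian driver $B$ (up to a deterministic scalar) and jump at the same Poisson epochs $\tau_j$, with $\Delta X_d(\tau_j)=\sqrt{|\beta_j|}\,u_j$ and $\Delta Y_d(\tau_j)=\mathrm{sign}(\beta_j)\sqrt{|\beta_j|}\,u_j$. By (\ref{ForCov}),
\[
[X_d,Y_d](t) = \bigl[\sqrt{|\psi|}\,B,\ \mathrm{sign}(\psi)\sqrt{|\psi|}\,B^{\ast}\bigr](t) + \sum_{s\leq t}\Delta X_d(s)\bigl(\Delta Y_d(s)\bigr)^{\ast}.
\]
The bilinearity (\ref{CovBil}) combined with (\ref{CovCB}) at $q=1$ reduces the continuous term to $\psi\,t\,\mathrm{I}_d$, while each jump contributes $\mathrm{sign}(\beta_j)|\beta_j|\,u_ju_j^{\ast}=\beta_j u_j u_j^{\ast}$, which sums to $\sum_{j=1}^{N(t)}\beta_j u_ju_j^{\ast}$. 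This reproduces $M_d(t)$ in (\ref{BGCP1}).

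The main obstacle is the bookkeeping in (a): tracking the interplay between the factor $d$ appearing in the polar decomposition of $\nu_d$ in Proposition \ref{polar}, the intensity of the Poisson process $N$, and the splitting of $\nu$ by sign, and in particular checking that the pushforward of the uniform law on the sphere of $\mathbb{C}^d$ under $u\mapsto uu^{\ast}$ is precisely $\omega_d$. Part (b) is then routine once the covariation identities (\ref{ForCov})--(\ref{CovCB}) from Section~2 are invoked.
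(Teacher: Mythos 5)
The paper offers no written argument for this proposition (it is dismissed as "straightforward"), so there is no official proof to compare against; I can only assess your sketch on its own terms. Part (b) is correct and essentially complete: splitting via (\ref{ForCov}) into the Brownian covariation plus the sum of jump products, and using (\ref{CovBil}) with (\ref{CovCB}) at $q=1$, gives $\mathrm{sign}(\psi)\left\vert \psi\right\vert t\mathrm{I}_{d}=\psi t\mathrm{I}_{d}$ for the continuous part and $\mathrm{sign}(\beta_{j})\left\vert \beta_{j}\right\vert u_{j}u_{j}^{\ast}=\beta_{j}u_{j}u_{j}^{\ast}$ for each common jump, which reassembles (\ref{BGCP1}). (Strictly one computes $\left[ X_{d},Y_{d}^{\ast}\right] $, as the paper itself writes in the proof of Theorem \ref{General}; the statement's $\left[ X_{d},Y_{d}\right] $ is the same abuse of notation, so this is not a defect of yours.)

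Part (a) as sketched has a genuine gap, and you have in effect conceded it by deferring "the main obstacle" instead of closing it. The L\'{e}vy measure of $\sum_{j=1}^{N(t)}\beta_{j}u_{j}u_{j}^{\ast}$ is the Poisson rate times the law of $\beta_{1}u_{1}u_{1}^{\ast}$. Under hypotheses (i) and (iii) as stated (rate one, jump law $\nu/\nu(\mathbb{R})$) this measure has total mass $1$, whereas the exponent you display carries $\nu(\mathrm{d}r)\,\sigma(\mathrm{d}u)$ with no $1/\nu(\mathbb{R})$ -- so your formula does not actually follow from the stated hypotheses -- and, more seriously, the target measure $\nu_{d}$ in (\ref{PDBGCD}) carries an explicit factor $d$ and has total mass of the order of $d\,\nu(\mathbb{R})$. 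Two L\'{e}vy measures of different total mass cannot coincide, so the identification with Proposition \ref{polar} fails unless the rate of $N$ is taken to be $d\,\nu(\mathbb{R})$ (compare the Poisson/Marchenko--Pastur example in Section 3, where the number of rank-one summands must grow like $\lambda d$ to produce the parameter-$\lambda$ limit). This is not routine bookkeeping to be checked afterwards: it is exactly the point where the claimed equality of distributions stands or falls, and a complete proof of (a) must either carry out the matching -- including verifying that the pushforward of the uniform law under $u\mapsto uu^{\ast}$ is $\omega_{d}$ and that the sign-splitting of $\nu$ reproduces $\lambda$ and $\nu_{V}$ -- or record that hypothesis (iii) needs the rate $d\,\nu(\mathbb{R})$. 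The rest of your outline for (a) (conditioning on $N(1)$, matching triplets, infinite divisibility determining the law) is the natural and correct strategy.
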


For the general case we have the following sample path approximation by
covariation processes for L\'{e}vy processes generated by the BGCD matrix ensembles.

\begin{theorem}
\label{General} Let $\mu$ be an infinitely divisible distribution on
$\mathbb{R}$ with triplet $(a^{2},\psi,\nu)$ and let $(M_{d})_{d\geq1}$ be the
corresponding BGCD matrix ensemble for $\Lambda(\mu).$ Let $d\geq1$ fixed and
assume that for $n\geq1$

i) $(\beta_{j}^{n})_{j\geq1}$ is a sequence of i.i.d. random variables with
distribution $\mu^{\ast \frac{1}{n}}$.

ii) $(u_{j}^{n})_{j\geq1}$ is a sequence of i.i.d. random vectors with uniform
distribution on the unit sphere of $\mathbb{C}^{d}$.

iii) $N^{n}=\left \{  N^{n}(t)\right \}  _{t\geq0}$ is a Poisson process with
parameter $n$.

iv) $B^{n}=\left \{  B^{n}(t)\right \}  _{t\geq0}$ is a $\mathbb{C}^{d}$-valued
standard Brownian motion.

v) $(\beta_{j}^{n})_{j\geq1}$, $(u_{j}^{n})_{j\geq1},N^{n}$ and $B^{n}$are independent.

Let
\begin{equation}
X_{d}^{n}(t)=\sqrt{\left \vert \mathcal{\psi}\right \vert }B^{n}(t)+\sum
_{j=1}^{N^{n}(t)}\sqrt{\left \vert \beta_{j}^{n}\right \vert }u_{j}^{n},\quad
t\geq0, \label{Gen1}%
\end{equation}%
\begin{equation}
Y_{d}^{n}(t)=\mathrm{sign}\left(  \mathcal{\psi}\right)  \sqrt{\left \vert
\mathcal{\psi}\right \vert }B^{n}(t)+\sum_{j=1}^{N^{n}(t)}\mathrm{sign}\left(
\beta_{j}^{n}\right)  \sqrt{\left \vert \beta_{j}^{n}\right \vert }u_{j}%
^{n},\quad t\geq0. \label{Gen2}%
\end{equation}
Then for each $d\geq1$ there exist $\mathbb{M}_{d}$-valued processes
$\widetilde{M}_{d}^{n}=\left \{  \widetilde{M}_{d}^{n}(t)\right \}  _{d\geq1}$
such that $\widetilde{M}_{d}^{n}\overset{\mathcal{L}}{=}[X_{d}^{n},Y_{d}^{n}%
]$,%
\[
\sup_{0<s\leq t}\left \Vert \widetilde{M}_{d}^{n}(s)-M_{d}(s)\right \Vert
\underset{n\rightarrow \infty}{\overset{\Pr}{\longrightarrow}}0\text{,\quad
}\forall t\geq0\text{,}%
\]
where $\left \{  M_{d}(t):t\geq0\right \}  $ is the $\mathbb{M}_{d}$-valued
L\'{e}vy process associated to $(M_{d})_{d\geq1}$.
\end{theorem}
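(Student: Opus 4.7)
The plan is to compute the covariation $[X_d^n,Y_d^n]$ explicitly as an $\mathbb{H}_d$-valued L\'evy process, match its L\'evy triplet to that of $M_d$ in the limit via Proposition \ref{convternas}, and then invoke Proposition \ref{convproc} to obtain the claimed uniform sample path approximation on compact intervals.

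First, using bilinearity (\ref{CovBil}), the identity $[B^n,(B^n)^*](t)=tI_d$ from (\ref{CovCB}) with $q=1$, the independence of the Brownian and compound Poisson components, and the sign choices built into (\ref{Gen1})--(\ref{Gen2}) (which collapse $\sqrt{|\psi|}\cdot \mathrm{sign}(\psi)\sqrt{|\psi|}=\psi$ and $\sqrt{|\beta_j^n|}\cdot \mathrm{sign}(\beta_j^n)\sqrt{|\beta_j^n|}=\beta_j^n$), and observing that $X_d^n$ and $Y_d^n$ jump simultaneously because they share the Poisson process $N^n$, I expect
\[
[X_d^n,Y_d^n](t)=\psi tI_d+\sum_{j=1}^{N^n(t)}\beta_j^nu_j^nu_j^{n*}.
\]
This exhibits $[X_d^n,Y_d^n]$ as an $\mathbb{H}_d$-valued L\'evy process with Gaussian operator $\mathcal{A}^n\equiv 0$, L\'evy measure $\nu^n=n\,\mathcal{L}(\beta_1^nu_1^nu_1^{n*})$ concentrated on $\mathbb{H}_{d(1)}$, and drift $\Psi^n$ read off from the L\'evy--Khintchine normalization.

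Next I will verify the three convergence conditions of Proposition \ref{convternas} against the target triplet of $M_d$ given in Proposition \ref{polar}. For (a), the classical compound Poisson approximation $n\mu^{*1/n}\to\nu$ vaguely on $\mathbb{R}\setminus\{0\}$, Fubini, and the independence of $\beta_j^n$ from $u_j^n$ give $\int f\,d\nu^n\to\mathbb{E}_u[\int f(\beta uu^*)\,\nu(d\beta)]$ for bounded continuous $f$ vanishing near $0$; this is precisely $\int f\,d\nu_d$ via the polar decomposition (\ref{PDBGCD}), since $\omega_d$ is by construction the law of $uu^*$ for $u$ uniform on the complex unit sphere. The central calculation is (b): on rank-one $\xi=\beta uu^*$ the quadratic form collapses as $|\mathrm{tr}(\Theta^*\xi)|^2=\beta^2|u^*\Theta^* u|^2$, so independence factors
\[
\mathrm{tr}(\Theta^*\mathcal{A}^{n,\epsilon}\Theta^*)=n\,\mathbb{E}[(\beta^n)^2\mathbf{1}_{|\beta^n|\leq\epsilon}]\cdot \mathbb{E}[|u^*\Theta^* u|^2].
\]
Combining the one-dimensional truncated-variance limit $\lim_{\epsilon\downarrow0}\lim_n n\mathbb{E}[(\beta^n)^2\mathbf{1}_{|\beta^n|\leq\epsilon}]=a^2$ with the spherical fourth-moment identity $\mathbb{E}[|u^*\Theta u|^2]=(d(d+1))^{-1}(\mathrm{tr}(\Theta^2)+\mathrm{tr}(\Theta)^2)$ for $u$ uniform on the complex unit sphere of $\mathbb{C}^d$, and comparing with (\ref{GPBGCD}), should recover exactly $\mathrm{tr}(\Theta^*\mathcal{A}_d\Theta^*)$. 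Condition (c), $\Psi^n\to\psi I_d$, follows by controlling the truncation correction $\int_{\|\xi\|\leq 1}\xi(1+\|\xi\|^2)^{-1}\nu^n(d\xi)$ against the symmetry of the law of $u_j^n$ on the sphere together with the classical drift bookkeeping for $n\mu^{*1/n}$.

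Granted the triplet convergence, Proposition \ref{convternas} yields $[X_d^n,Y_d^n](1)\overset{\mathcal{L}}{\to} M_d(1)$, and Proposition \ref{convproc} supplies processes $\widetilde M_d^n\overset{\mathcal{L}}{=}[X_d^n,Y_d^n]$ with the claimed uniform-on-compacts convergence in probability. The main obstacle I anticipate is condition (b), which requires simultaneously handling the small-jump truncation $\epsilon\downarrow 0$ and the diverging intensity $n\to\infty$ and then matching the resulting bilinear form to the specific operator (\ref{GPBGCD}); conditions (a) and (c), as well as the covariation computation in the first paragraph, should be routine consequences of bilinearity and the classical one-dimensional compound Poisson approximation theory.
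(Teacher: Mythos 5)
Your proposal follows essentially the same route as the paper's proof: compute $[X_d^n,Y_d^n]$ explicitly as the matrix compound Poisson process $\psi tI_d+\sum_{j\leq N^n(t)}\beta_j^nu_j^nu_j^{n*}$, verify convergence of its L\'evy triplet to that of $M_d$ (Proposition \ref{polar}) via Proposition \ref{convternas} using the one-dimensional compound Poisson approximation $n\mu^{*1/n}$ together with the spherical moment identity $dE_u|\mathrm{tr}(\Theta uu^*)|^2=(d+1)^{-1}(\mathrm{tr}(\Theta^2)+(\mathrm{tr}\Theta)^2)$, and conclude with Proposition \ref{convproc}. The only differences are cosmetic (you describe the L\'evy measure directly as $n\,\mathcal{L}(\beta_1^nu_1^nu_1^{n*})$ rather than through the polar decomposition (\ref{PDBGCD}), and you flag the drift truncation bookkeeping that the paper passes over silently), so the argument matches the paper's.
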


\begin{proof}
By the compound Poisson approximation for infinitely divisible distributions
on $\mathbb{R}$ (see \cite[pp 45]{Sato1})$,$ we choose $\mu_{n}$ an infinitely
divisible distribution such that $\mu_{n}\longrightarrow \mu,$ where we take
the triplet of $\mu_{n}$ as $\left(  0,\psi^{n},\nu^{n}\right)  ,$ $\psi
^{n}=\int \frac{x}{1+\left \vert x\right \vert ^{2}}\nu^{n}\left(  dx\right)  $
and $\nu^{n}=n\mu^{\ast \frac{1}{n}}$, satisfying (see \cite[Theorem
8.7]{Sato1}) that for every bounded continuous function $f$ vanishing in a
neighborhood of zero%
\begin{equation}
\int_{\mathbb{R}}f\left(  r\right)  \nu^{n}\left(  dr\right)  \longrightarrow
\int_{\mathbb{R}}f\left(  r\right)  \nu \left(  dr\right)  \text{ as
}n\rightarrow \infty \text{,} \label{lev}%
\end{equation}
for each $\varepsilon>0$%
\begin{equation}
\int_{\left \vert r\right \vert \leq \varepsilon}r^{2}\nu^{n}\left(  dr\right)
\longrightarrow a^{2}\text{ as }n\rightarrow \infty, \label{gau}%
\end{equation}
and $\psi^{n}\rightarrow \psi$.

A similar proof as for Proposition \ref{BGCDCPP} gives%
\[
M_{d}^{n}(t):=\left[  X_{d}^{n},Y_{d}^{n\ast}\right]  (t)=\mathcal{\psi
}t\mathrm{I}_{d}+\sum_{j=0}^{N^{n}(t)}\beta_{j}^{n}u_{j}^{n}u_{j}^{n\ast},
\]
which is a matrix value compound Poisson process with triplet $\left(
\mathcal{A}_{d}^{n},\psi_{d}^{n},\nu_{d}^{n}\right)  $ given by $\mathcal{A}%
_{d}^{n}=0,\  \psi_{d}^{n}=\psi \mathrm{I}_{d}$ and%
\begin{equation}
\nu_{d}^{n}\left(  E\right)  =d\int_{\mathbb{S}(\mathbb{H}_{d(1)})}\int
_{0}^{\infty}1_{E}\left(  rV\right)  \nu_{V}^{n}\left(  \mathrm{d}r\right)
\Pi \left(  \mathrm{d}V\right)  ,\quad E\in \mathcal{B}\left(  \mathbb{H}%
_{d}\backslash \left \{  0\right \}  \right)  \text{,} \label{nudn}%
\end{equation}
where $\nu_{V}^{n}=\nu^{n}|_{(0,\infty)}$ or $\nu^{n}|_{(-\infty,0)}$
according to $V\geq0$ or\ $V\leq0$ and $\Pi$ is the measure on $\mathbb{S}%
(\mathbb{H}_{d(1)})$ in (\ref{pi}).

We will prove that $M_{d}^{n}\overset{\mathcal{L}}{\longrightarrow}M_{d}$ by
showing that the triplet $\left(  \mathcal{A}_{d}^{n},\psi_{d}^{n},\nu_{d}%
^{n}\right)  $ converges to the triplet $\left(  \mathcal{A}_{d},\psi_{d}%
,\nu_{d}\right)  $ of the BGCD matrix ensemble in Proposition \ref{polar} in
the sense of Proposition \ref{convternas}:

We observe that $\psi_{d}^{n}=\psi \mathrm{I}_{d}$ for each $n$.

Let $f:\mathbb{H}_{d(1)}\longrightarrow \mathbb{R}$ be a continuous bounded
function vanishing in a neighborhood of zero. Using the polar decomposition
(\ref{PDBGCD}) for $\nu_{d}^{n}$ we have%
\begin{align}
\int_{\mathbb{H}_{d(1)}}f\left(  \xi \right)  \nu_{d}^{n}\left(  d\xi \right)
&  =d\int_{\mathbb{S}(\mathbb{H}_{d(1)})}\int_{0}^{\infty}f\left(  rV\right)
\nu_{V}^{n}\left(  dr\right)  \Pi \left(  dV\right) \nonumber \\
&  =d\int_{\mathbb{S}(\mathbb{H}_{d(1)})\cap \overline{\mathbb{H}}_{d}^{+}}%
\int_{\left \{  -1,1\right \}  }\int_{0}^{\infty}f\left(  trV\right)  \nu
_{V}^{n}\left(  dr\right)  \lambda^{n}\left(  dt\right)  \omega_{d}\left(
dV\right)  . \label{intfb}%
\end{align}
For $V\in \mathbb{S}(\mathbb{H}_{d(1)})\cap \overline{\mathbb{H}}_{d}^{+}$
fixed,
\begin{align*}
\int_{\left \{  -1,1\right \}  }\int_{0}^{\infty}f\left(  trV\right)  \nu
_{V}^{n}\left(  dr\right)  \lambda^{n}\left(  dt\right)   &  =\lambda
^{n}\left(  \left \{  1\right \}  \right)  \int_{0}^{\infty}f\left(  rV\right)
\nu^{n}\left(  dr\right) \\
&  +\lambda^{n}\left(  \left \{  -1\right \}  \right)  \int_{-\infty}%
^{0}f\left(  rV\right)  \nu^{n}\left(  dr\right)  \text{.}%
\end{align*}
As a function of $r$, $f\left(  rV\right)  $ is a real valued continuous
bounded function vanishing in a neighborhood of zero, hence using (\ref{lev})%
\[
\lambda^{n}\left(  \left \{  1\right \}  \right)  \int_{0}^{\infty}f\left(
rV\right)  \nu^{n}\left(  dr\right)  \longrightarrow \lambda \left(  \left \{
1\right \}  \right)  \int_{0}^{\infty}f\left(  rV\right)  \nu \left(  dr\right)
\]
and%
\[
\lambda^{n}\left(  \left \{  -1\right \}  \right)  \int_{-\infty}^{0}f\left(
rV\right)  \nu^{n}\left(  dr\right)  \longrightarrow \lambda \left(  \left \{
-1\right \}  \right)  \int_{-\infty}^{0}f\left(  rV\right)  \nu \left(
dr\right)  .
\]
Then from (\ref{intfb})%
\begin{align*}
\int_{\mathbb{H}_{d(1)}}f\left(  \xi \right)  \nu_{d}^{n}\left(  d\xi \right)
&  \longrightarrow d\int_{\mathbb{S}(\mathbb{H}_{d(1)})\cap \overline
{\mathbb{H}}_{d}^{+}}\int_{\left \{  -1,1\right \}  }\int_{0}^{\infty}f\left(
trV\right)  \nu_{V}\left(  dr\right)  \lambda \left(  dt\right)  \omega
_{d}\left(  dV\right) \\
&  =d\int_{\mathbb{S}(\mathbb{H}_{d(1)})}\int_{0}^{\infty}f\left(  rV\right)
\nu_{d}\left(  dr\right)  \Pi \left(  dV\right)  =\int_{\mathbb{H}_{d(1)}%
}f\left(  \xi \right)  \nu_{d}\left(  d\xi \right)  .
\end{align*}

Next, we verify the convergence of the Gaussian part.

Let us define, for each $\varepsilon>0$ and $n\geq1,$ the operator
$\mathcal{A}^{n,\varepsilon}:\mathbb{H}_{d}\longrightarrow \mathbb{H}_{d}$ by
\[
\mathrm{tr}\left(  \Theta \mathcal{A}^{n,\varepsilon}\Theta \right)
=\int_{\left \Vert \xi \right \Vert \leq \varepsilon}\left \vert \mathrm{tr}\left(
\Theta \xi \right)  \right \vert ^{2}\nu_{d}^{n}\left(  d\xi \right)  .
\]
From (\ref{nudn}) we get
\begin{align*}
&  \int_{\left \Vert \xi \right \Vert \leq \varepsilon}\left \vert \mathrm{tr}%
\left(  \Theta \xi \right)  \right \vert ^{2}\nu_{d}^{n}\left(  d\xi \right)
=d\int_{\mathbb{S}(\mathbb{H}_{d(1)})}\int_{0}^{\infty}\mathbf{1}_{\left \{
\left \Vert rV\right \Vert \leq \varepsilon \right \}  }\left(  rV\right)
\left \vert \mathrm{tr}\left(  r\Theta V\right)  \right \vert ^{2}\nu_{V}%
^{n}\left(  dr\right)  \Pi \left(  dV\right) \\
&  =d\int_{\mathbb{S}(\mathbb{H}_{d(1)})\cap \overline{\mathbb{H}}_{d}^{+}}%
\int_{\left \{  -1,1\right \}  }\int_{0}^{\infty}\mathbf{1}_{\left \{
r\leq \varepsilon \right \}  }\left(  rtV\right)  r^{2}\left \vert \mathrm{tr}%
\left(  \Theta V\right)  \right \vert ^{2}\nu_{V}^{n}\left(  dr\right)
\lambda \left(  dt\right)  \omega_{d}\left(  dV\right) \\
&  =d\int_{\mathbb{S}(\mathbb{H}_{d(1)})\cap \overline{\mathbb{H}}_{d}^{+}}%
\int_{\mathbb{R}}\mathbf{1}_{\left \{  r\leq \varepsilon \right \}  }\left(
rV\right)  r^{2}\left \vert \mathrm{tr}\left(  \Theta V\right)  \right \vert
^{2}\nu^{n}\left(  dr\right)  \omega_{d}\left(  dV\right) \\
&  =d\int_{\mathbb{S}(\mathbb{H}_{d(1)})\cap \overline{\mathbb{H}}_{d}^{+}%
}\left \vert \mathrm{tr}\left(  \Theta V\right)  \right \vert ^{2}%
\int_{\left \vert r\right \vert \leq \varepsilon}r^{2}\nu^{n}\left(  dr\right)
\omega_{d}\left(  dV\right)  .
\end{align*}
Then using (\ref{gau}),%
\[
\int_{\left \Vert \xi \right \Vert \leq \varepsilon}\left \vert \mathrm{tr}\left(
\Theta \xi \right)  \right \vert ^{2}\nu_{d}^{n}\left(  d\xi \right)
\longrightarrow da^{2}E_{u}\left \vert \mathrm{tr}\left(  \Theta uu^{\ast
}\right)  \right \vert ^{2}\text{,}%
\]
where $u$ is a uniformly distributed column random vector in the unit sphere
of $\mathbb{C}^{d}$. Finally
\begin{equation}
da^{2}E_{u}\left \vert \mathrm{tr}\left(  \Theta uu^{\ast}\right)  \right \vert
^{2}=\frac{a^{2}}{d+1}\left(  \mathrm{tr}\left(  \Theta^{2}\right)  +\left(
\mathrm{tr}\left(  \Theta \right)  \right)  ^{2}\right)  =\mathrm{tr}\left(
\Theta^{\ast}\mathcal{A}_{d}\Theta^{\ast}\right)  , \label{covgau}%
\end{equation}
\newline where $\mathcal{A}_{d}$ is as in (\ref{GPBGCD}) and the first
equality in (\ref{covgau}) follows from page $637$ in \cite{CD}. Thus
$M_{d}^{n}\overset{\mathcal{L}}{\longrightarrow}M_{d}$ and the conclusion
follows from Proposition \ref{convproc}.
\end{proof}

\section{Final remarks}

\begin{enumerate}
\item For the present work we do not have a specific financial application in
mind. However, infinitely divisible nonnegative definite matrix processes with
rank one jumps as characterized in Theorem \ref{Sub}, might be useful in the
study of multivariate high-frequency data using realized covariation, where
matrix covariation processes appear; see for example \cite{BNSh04}. Moreover,
it seems interesting to explore the construction of financial oriented matrix
L\'{e}vy based models as in \cite{BNSe09} for the specific case of rank one
jumps matrix process of bounded variation.

\item In the direction of free probability, it is well known that the
so-called Hermitian Brownian motion matrix ensemble $\left \{  B_{d}%
(t):t\geq0\right \}  $, $d\geq1,$ is a realization of the free Brownian motion.
It is an open question if the matrix L\'{e}vy processes from BGCD models
$\left \{  M_{d}(t):t\geq0\right \}  $, $d\geq1$, are realizations of free
L\'{e}vy processes. A first step in this direction would be to prove that the
increments of a BGCD ensemble become free independent. A second step, more
related to our work, would be to have an insight of the implication of the
rank one condition of the matrix L\'{e}vy BGCD process in Corollary
\ref{corBG} as realization of a positive free L\'{e}vy process. These two
problems are the subjects of current research of one of the coauthors.

\item In \cite{BG07} a new Bercovici-Pata bijection for certain free
convolution $\boxplus_{c}$ is established and a $d\times d^{\prime}$ random
matrix model for this bijection which is very close to the one given by the
BGCD random matrix model is established. It can be seen that the L\'{e}vy
measures of these rectangular BGCD random matrices are supported in the subset
of $d\times d^{\prime}$ complex matrices of rank one, in a similar way as done
in \cite{DRA} for the BGCD case. It would be of interest to have the analogue
results on bounded variation of Section 4 for the L\'{e}vy processes
associated to these rectangular BGCD random matrices, considering an
appropriate nonnegative definite notion for rectangular matrices.
\end{enumerate}

\noindent \textbf{Acknowledgement}. \emph{This work was done while Victor
P\'{e}rez-Abreu was visiting Universidad Aut\'{o}noma de Sinaloa in January
and May of 2012. The authors thank two referees for the very carefully and
detailed reading of a previous version of the manuscript and for their
comments that improved Theorems \ref{Sub} and \ref{Bdv} and the presentation
of the present version of the manuscript.}

\end{document}